\newtheorem{theorem}{Theorem}
\newtheorem{corollary}[theorem]{Corollary}
\newtheorem{definition}[theorem]{Definition}
\newtheorem{example}[theorem]{Example}
\newtheorem{lemma}[theorem]{Lemma}
\newtheorem{proposition}[theorem]{Proposition}
\newtheorem{remark}[theorem]{Remark}
\title{On the selection of subaction and  measure for a subclass of potentials defined by P. Walters}
\author{A. T. Baraviera\thanks{baravi@mat.ufrgs.br, Instituto de Matem\'atica - UFRGS, Partially supported by DynEurBraz, PRONEX -- Sistemas
Dinamicos,  INCT, Convenio Brasil-Franca},
A. O. Lopes\thanks{arturoscar.lopes@gmail.com, Instituto de Matem\'atica - UFRGS - Partially supported by DynEurBraz, CNPq, PRONEX -- Sistemas
Dinamicos,  INCT, Convenio Brasil-Franca} and J. K. Mengue\thanks{jairokras@gmail.com,  Instituto de Matem\'atica - UFRGS,   CNPq Pos-doc Schollarship} }
\begin{document}

\maketitle

\begin{abstract}

Suppose $\sigma$ is the shift  acting on Bernoulli space $X=\{0,1\}^\mathbb{N}$,  and, consider a fixed function $f:X \to \mathbb{R}$, under the Waters's conditions (defined in a paper in ETDS 2007). For each real value $t\geq 0$ we consider the Ruelle Operator $L_{tf}$. We are interested in the main eigenfunction $h_t$ of $L_{tf}$, and, the main eigenmeasure $\nu_t$, for the dual operator $L_{tf}^*$, which we consider normalized in such way  $h_t(0^\infty)=1$, and, $\int h_t \,d\,\nu_t=1, \forall t>0$.  We denote $\mu_t= h_t \nu_t$ the Gibbs state for the potential $t\, f$. By selection of a subaction $V$, when the temperature goes to zero (or, $t\to \infty$), we mean the existence of the limit
$$V:=\lim_{t\to\infty}\frac{1}{t}\log(h_{t}).$$

By selection of a measure $\mu$, when the temperature goes to zero (or, $t\to \infty$), we mean the existence of the limit (in the weak$^*$ sense)
$$\mu:=\lim_{t\to\infty} \mu_t.$$

We present a large family of non-trivial examples of $f$ where the selection of measure exists. These $f$  belong to a sub-class  of potentials introduced by P. Walters. In this case, explicit expressions for the selected $V$ can be obtained for a certain large family of parameters.


\end{abstract}

\section{Introduction}

We consider $X=\{0,1\}^{\mathbb{N}}$ with the usual metric
$$d(x,y) =\theta^{N},\ \ x_{1}=y_{1},...,x_{N-1}=y_{N-1},x_{N}\neq y_{N},$$
where $ x=(x_{1}x_{2}...), y=(y_{1}y_{2}...),$
with $\theta$ fixed $0<\theta<1$.

 We also consider here a fixed potential (just a function)  $f:X \to \mathbb{R}$,  which satisfies Walter's summable condition \cite{W} (to be defined later). There are Holders functions among the set of such potentials. For each real value $t\geq 0$, we denote  $L_{tf}$ the Ruelle Operator given by the potential $t\, f$; that is,  $L_{tf}$, acting on continuous functions $w:X\to\mathbb{R}$, is defined by
\[L_{tf}(w)(x) := \sum_{\sigma(y)=x}e^{tf(y)}w(y).\]
The parameter $t=\frac{1}{T}>0$ represents the inverse of the temperature $T$ on Thermodynamic Formalism \cite{PP} \cite{Kel}.
In the setting of Statistical Mechanics we are considering a problem in the lattice $\mathbb{N}$. The potential $t\, f$ represents an interaction on the lattice. When $f$ is a Holder function, any statistical problem in the lattice $\mathbb{Z}$ can be transformed into a problem in the lattice $\mathbb{N}$ \cite{PP}.

 We are interested in the main eigenfunction $h_t$ of $L_{tf}$, and, the main eigenmeasure $\nu_t$, for the dual operator $L_{tf}^*$ (both with the same eigenvalue).

  {\bf  For the purpose of selection of subaction, we assume here that the eigenfunction $h_t$ satisfies, for all $t$, the expression $h_t(0^\infty)=1$.}

 We assume that the eigen-measure $\nu_t$
 satisfies the normalization $\int h_t \,d\,\nu_t=1$.

We denote by ${\cal M}$ the set of invariant probabilities for the shift.

The probability $\mu_t = h_t \, \nu_t$ belongs to ${\cal M}$, and, it is called the Gibbs state for the potential $t \, f$.


The potentials that we consider here are such that the Gibbs state is  an equilibrium probability for $f$, that means, such $\mu_t$ maximizes Pressure (or, Free Energy):
\[P(tf):= \sup_{\rho \in \cal M} \left(h(\rho) + \int t\, f \, d \rho\right) = h(\mu_{t}) + \int t\,f\, d\mu_{t} ,\]
 where $h(\rho)$ is the Kolmogorov entropy of $\rho$ \cite{PP}.

A probability $\hat{\rho}$ which maximizes
$$ \int \, f \, d\, \rho,  \,\,\rho \in {\cal M},$$
is called a maximizing probability for $f$ \cite{CoG} \cite{CLT} \cite{Jenkinson}  \cite{Bousch1} \cite{leplaideur-max} \cite{Bousch-walters} \cite{Mo} \cite{BCLMS}. We denote $\beta(f)$ this maximal value.

One can show for the potentials we consider here that
any accumulation point of $\mu_{t_i}$ $t_i \to \infty,$ is a maximizing probability for $f$ (due to the maximal pressure property the proof is the same as in \cite{CoG}).

Even for Holder potentials $f$, this maximizing probability is, of course, not necessarily  unique. In general, it  is not  positive on all open sets (even when this maximizing probability is unique), which is quite different of the case when one considers the Gibbs state $\mu_t$ at non zero temperature, that is, $t\neq \infty$ (in the case $f$ is Holder) \cite{CLT}.
 If the maximizing probability is unique, then, of course, the limit $\mu_t$ exist, and, it is equal to this probability. In this case, we have a trivial example of selection of limit measure, when $t \to \infty$.

 The selection problem is nontrivial only when there is more than one maximizing probability. There exists an interesting example by R. Chazottes and M. Hochman  \cite{chazottes-cv} of a Holder potential $f$ such that $\mu_t$ does not converges, when $t\to \infty$.
 If the potential depends on a finite number of coordinates, then the limit exists \cite{Bremont} \cite{leplaideur-max}. The selected measure do not have to be ergodic (see also an example in the end of the paper).

 An interesting problem is to find sufficient conditions to assure  the existence of a limit probability in the case the maximizing probability is not unique. And, also, to find explicit examples where one can exhibit the limit selected $V$. To address these questions is one the main purposes of the present paper. Part of our work is inspired by results on selection from the paper
\cite{BLL} (a different setting where it is considered a Bernoulii space with three symbols).

This kind of question -  existence, or non-existence of limit probability at temperature zero -  it is very important in Statistical Mechanics (see \cite{VE}, for the case where the state space is $S^1$, not $\{0,1\}$, the so called $XY$ Model, where the authors show an example with no limit for the Gibbs probabilities at temperature $T$, when $T=\frac{1}{t}\to 0$).
We refer the reader to \cite{BCLMS} for other  related results on the general $XY$ model.

We are interested in potentials $f$ that depends on infinite coordinates.

\begin{definition}
We say that a continuous function $u:X \to \mathbb{R}$
is a calibrated subaction for $f$, if for any  $y$ in $X$, we have
$$
 u(y) =\sup_{\sigma(x)=y}\{f(x)+ u(x)-\beta(f)\}.
$$
\end{definition}

If we add a constant to a subaction we get a new subaction. If $f$ is a Holder potential and the maximizing probability is unique, then there is a unique subaction up to an additive constant \cite{CLT} \cite{GL1}.
If, for a given subsequence we have $\frac{1}{t_i}\log(h_{t_i})\to V_1$, then, $V_1$ is a calibrated subaction \cite{BLT} \cite{GL1}.

If the maximizing probability is unique and $f$ is Holder, then, under the normalizing condition $h_t (0^\infty)=1$, we have that $\frac{1}{t}\log(h_{t})$
 converges \cite{CoG} \cite{CLT}.

 Moreover, under the normalization $\int d \nu_t=1$, $\int h_t \, \nu_t=1$,  we  have that $\frac{1}{t}\log(h_{t})$ is equicontinuous.
If the maximizing probability is unique,  $f$ is Holder, and we consider two (possible) different convergent subsequence
$\frac{1}{t_i}\log(h_{t_i})\to V_1$, and $\frac{1}{s_i}\log(h_{s_i})\to V_2$, then, of course,  $V_1-V_2$ is a constant \cite{BLT} \cite{GL1}.
In a forthcoming paper we will show that, if the maximizing probability is unique, then, with this normalization,  there exists also the limit $\frac{1}{t}\log(h_{t})$, as $t\to \infty$.

Under the uniqueness assumption of the maximizing probability, if we fix a point $x_0\in X$, and normalize $h_t$ by $h_t(x_0)=1$, for all $t$, then, the reasoning is easier, and $\frac{1}{t}\log(h_{t})$ converges always to a certain subaction $V$ (which satisfies $V(x_0)=0$), as $t\to \infty$.

Anyway, as we said before, here we are mainly interested in the case the maximizing probability is not unique. In this case there exists subactions which do not differ by a constant (see Theorems 10 and 12 in  \cite{GL1})

 We denote by $\sigma$ the shift operator acting on $X$. For $n\in \mathbb{N}$, we denote by $0^{n}$ and $1^{n}$ the points in $\{0,1\}^{n}$ given by
\[0^{n}:=\underbrace{0...0}_{n}, \ \ 1^{n}:=\underbrace{1...1}_{n} \,.\, \]

Moreover, given a finite sequence $x_{1}...x_{n} \in \{0,1\}^{n}$, we denote by $(x_{1}...x_{n})^{\infty}$ the periodic orbit in $X$ given by the successive repetition of $x_{1}...x_{n}$.

 Consider the class $R(X)$ (see \cite{W}) of function $f:X\to \mathbb{R}$ given by the rule: there exists sequences $\{a_{n}\}_{n\geq2}\to a$, $\{b_{n}\}_{n\geq 1} \to b$, $\{c_{n}\}_{n\geq2}\to c$, $\{d_{n}\}_{n\geq 1} \to d$, such that, for any $z\in X$, $p\geq 2$, $q\geq 1$:
\[f(0^{p}1z)=a_{p}, \, f(01^{q}0z) = b_{q}, \, f(1^{p}0z) = c_{p}, \, f(10^{q}1z)=d_{q}, \]
\[ f(0^{\infty})=a, \, f(01^{\infty})=b, \, f(1^{\infty})=c, \, f(10^{\infty})=d.\]

The potentials consider by F. Hofbauer \cite{Hof} \cite{L1} \cite{FL} (not Holder) are on this class $R(X)$. He was interested in phase transitions (at positive temperature).

Depending of the velocity of convergence of the sequences $a_p, b_p,c_p$ and $d_p$,  such function $f$ will be Holder (Lipschitz), or not. We call here \textbf{Walters's summable potential} (called $W(X,T)$ in \cite{W})  any function $f\in R(X)$ as above, such that $\sum_{n\geq 2}(a_{n}-a)$ and $\sum_{n\geq 2}(c_{n}-c)$ are convergent series (see \cite{W}, theorem 1.1 (ii)). This happen when $f$ is Holder.

 We point out that in some papers it is usual the terminology ``a potential satisfying the Walters's condition" (see \cite{Bousch-walters}), but this is a different concept from the one we use here (we consider in the sense of a class described in \cite{W}).

P. Walters \cite{W} introduce the class $R(X)$
and analyze for a certain subclass of potentials (see
Theorem 2.4
in page 1332) the Ruelle operator for these
potentials (not necessarily Holder).
In this case the main eigenvalue $\lambda_t$ is
given in an implicit expression
(see expression on top of page 1341 \cite{W},
or, in the beginning of our section 2), and,
in general, one can  not get it in an explicit form.
From the expression of this eigenvalue $\lambda_t$,
then, you have indeed the explicit
form of the eigenfunction $h_t$, and, the
eigenmeasure $\nu_t$ (see page 1341 \cite{W}).

For the class of summable Walters's potentials we consider all the above apply.

We point out that when $f$ is Holder
there is no phase transitions at non-zero
temperature (see \cite{PP}).
\bigskip

{\bf We will use bellow and in the rest of the paper the notation: for fixed $q\geq 1$, when $j=0$, we have that $a_{q+1}+...+a_{q+j}=0$.}

\bigskip

\bigskip
For the family of Walters's summable potentials, as we said,  we will analyze the two selection problems on the zero temperature limit. We will first prove the following:

\begin{theorem}\label{teo1}
Let $f$ be a Walters's summable potential. Suppose $a=c=\beta(f)$ and any maximizing measure is of the form  $s\,\delta_{0^{\infty}} + (1-s)\,\delta_{1^{\infty}}, \ \ s \in [0,1]$.
Then there exists a unique real value $A\leq 0$ such that
\footnotesize
\begin{align*}
2\beta(f) =& \max\{ \sup_{j\geq 0}(d_{1+j} + a_{2}+...+a_{1+j} - j\beta(f)), d+ \sum_{j=1}^{\infty}(a_{1+j}-\beta(f)) -A\} \nonumber \\
& + \max\{\sup_{j\geq 0}(b_{1+j} + c_{2}+...+c_{1+j} - j\beta(f)), b+ \sum_{j=1}^{\infty}(c_{1+j}-\beta(f)) -A\}.
\end{align*}
\normalsize
Also, there exists $V:=\lim_{t\to\infty}\frac{1}{t}\log(h_{t})$, and $V$ satisfies
\[V(0^{\infty})=0,\]
\[V(1^{\infty}) =  b -d -\beta(f) + \max\{\sup_{j}(d_{1+j} + a_{2}+...+a_{1+j} - j\beta(f)), d+ \sum_{j=1}^{\infty}(a_{1+j}-\beta(f)) -A\},\]
\[V(0^{q}1z) =  A -d  +\max\{\sup_{j}(d_{q+j} + a_{q+1}+...+a_{q+j} - j\beta(f)), d+ \sum_{j=1}^{\infty}(a_{q+j}-\beta(f)) -A\},\]
\begin{align*}
 V(1^{q}0z) = &  -d -\beta(f) +A \\
& \, + \max\{\sup_{j}(d_{1+j} + a_{2}+...+a_{1+j} - j\beta(f)), d+ \sum_{j=1}^{\infty}(a_{1+j}-\beta(f)) -A\}\\
& \, + \max\{\sup_{j}(b_{q+j} + c_{q+1}+...+c_{q+j} - j\beta(f)), b+ \sum_{j=1}^{\infty}(c_{q+j}-\beta(f)) -A\}.
\end{align*}

\end{theorem}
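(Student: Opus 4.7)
My plan is to work directly from Walters's explicit formulas (\cite{W}, page 1341) for the main eigenvalue $\lambda_t$, eigenfunction $h_t$, and eigenmeasure $\nu_t$ of $L_{tf}$ for potentials $f \in R(X)$. These formulas express each $h_t(x)$ as a convergent series of exponentials $e^{t\Psi_j(x)}$ indexed by finite excursions between the fixed cylinders, with exponents built from the coefficients $a_n, b_n, c_n, d_n$ and a shift depending on $\log\lambda_t$. I would first rescale by $e^{-t\beta(f)}$ so that the leading exponential growth is removed and the subleading behavior survives as $t \to \infty$.

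The analytic core is a Laplace-principle computation: under Walters's summability hypothesis $\sum(a_n-a)<\infty$, $\sum(c_n-c)<\infty$, the tails of the series for $h_t(x)$ are controlled uniformly, so
\[
\tfrac{1}{t}\log h_t(x) \;\longrightarrow\; \sup_{j}\Psi_j^{\infty}(x),
\]
where $\Psi_j^\infty$ is the formal $t\to\infty$ limit of $\Psi_j$. Reading these suprema off point by point at $x=0^\infty$, $1^\infty$, $0^q 1 z$, $1^q 0 z$ produces exactly the two competing quantities displayed in the statement: the $\sup_j (d_{q+j} + a_{q+1}+\cdots+a_{q+j} - j\beta(f))$ coming from finite excursions through $q+j$ consecutive $0$'s, and the closed-form sum $d + \sum_{j\geq 1}(a_{q+j}-\beta(f)) - A$ from the infinite excursion (where the $-A$ contribution is the $\tfrac{1}{t}\log$ image of the eigenvalue factor). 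The constant $A$ is then identified as the $t\to\infty$ limit of a suitable subleading normalization of $\log\lambda_t - t\beta(f)$, and the defining equation $2\beta(f) = M_1(A) + M_2(A)$ is the $\tfrac{1}{t}\log$ limit of Walters's transcendental equation for $\lambda_t$, which in turn expresses the balance of the return cycle $0^\infty \to 1^\infty \to 0^\infty$.

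Existence and uniqueness of $A$ reduce to monotonicity: both $M_1$ and $M_2$ are continuous and nonincreasing in $A$, their sum blows up as $A\to-\infty$, and stabilizes for $A$ large, so the level $2\beta(f)$ is attained at exactly one point. The bound $A\leq 0$ is equivalent to $M_1(0)+M_2(0)\leq 2\beta(f)$; after unpacking with the change of index $q=1+j$, this inequality turns out to be precisely the statement that every periodic orbit of the form $(1^p 0^q)^\infty$ has Birkhoff average at most $\beta(f)$, which is guaranteed by the hypothesis that every maximizing measure is supported on $\{0^\infty, 1^\infty\}$. Once $V$ is written down, a direct substitution then verifies that it satisfies the calibrated-subaction identity at $0^\infty$ and $1^\infty$ precisely because of the sign $A\leq 0$.

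The main obstacle I anticipate is keeping the bookkeeping in Walters's series straight: matching the shifted index ranges $q\mapsto q+1$, the orbit lengths $p+q$, and the isolated $-d$, $-\beta(f)$ terms that distinguish the formulas for $V(0^q 1 z)$ and $V(1^q 0 z)$ from those for $V(0^\infty)$ and $V(1^\infty)$. A secondary issue is upgrading the Laplace limit from subsequential to genuine: this follows from the uniqueness of $A$ combined with the equicontinuity of $\tfrac{1}{t}\log h_t$ noted earlier in the paper, since any two subsequential limits must determine the same constant $A$ and hence agree on every cylinder type.
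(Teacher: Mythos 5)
Your overall route is the same as the paper's: read $V$ off Walters's explicit series for $h_t$ via a Laplace-type estimate (the paper's Lemma \ref{e}, where the tail $\sum_{j>n}e^{-j\epsilon_t}\sim \epsilon_t^{-1}$ is exactly what produces the $-A$ term), and determine $A=\lim\frac1t\log(P(tf)-t\beta(f))$ from the $\frac1t\log$ limit of Walters's eigenvalue equation. The subsequential-to-full-limit upgrade via uniqueness of $A$ is also the paper's argument. But there is a genuine gap in your uniqueness step. Write $M_1(y)=\max\{S_1,\,C_1-y\}$ and $M_2(y)=\max\{S_2,\,C_2-y\}$, where $S_i$ are the two $\sup_{j\ge 0}(\cdots)$ terms and $C_i$ the two constants $d+\sum(a_{1+j}-\beta(f))$, $b+\sum(c_{1+j}-\beta(f))$; note $C_i\le S_i$ always, since $C_i$ is the $j\to\infty$ limit of the sequence whose supremum is $S_i$. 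The map $y\mapsto M_1(y)+M_2(y)$ is continuous, nonincreasing, blows up as $y\to-\infty$ and is \emph{constant equal to $S_1+S_2$} on the ray $y\ge\max\{C_1-S_1,\,C_2-S_2\}$. Your monotonicity argument therefore gives a unique solution only when $2\beta(f)>S_1+S_2$. In the boundary case $2\beta(f)=S_1+S_2$ the level set is that entire ray, and if both $S_1>C_1$ and $S_2>C_2$ its left endpoint is strictly negative, so there would be a whole interval of solutions in $(-\infty,0]$ and the theorem's uniqueness claim would fail.

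This is exactly where the hypothesis ``every maximizing measure is $s\,\delta_{0^\infty}+(1-s)\,\delta_{1^\infty}$'' must be used, and not where you use it. (The inequality $S_1+S_2\le 2\beta(f)$, which you attribute to that hypothesis, already follows from $\beta(f)$ being the maximal Birkhoff average over all invariant measures.) The paper's argument in the boundary case is: if both suprema $S_1,S_2$ were attained at finite indices $j_0,j_1$, then $2\beta(f)=S_1+S_2$ forces the periodic orbit $(0^{j_0+1}1^{j_1+1})^\infty$ to have Birkhoff average exactly $\beta(f)$, contradicting the support hypothesis; hence $S_1=C_1$ or $S_2=C_2$, which pushes the left endpoint of the ray up to $0$ and makes $A=0$ the unique nonpositive solution. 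You need to add this case analysis; without it the uniqueness of $A$ (and hence the convergence of $\frac1t\log\epsilon_t$, on which the whole formula for $V$ depends) is not established. A minor further bookkeeping point: the isolated $+A$ in $V(0^q1z)$ comes from $\frac1t\log(1-e^{-\epsilon_t})\to A$ in the prefactor $(e^{P(tf)}-e^{ta})e^{-P(tf)}$, which your sketch does not account for explicitly.
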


The value $A$ can be obtained in an explicit form (see Lemma \ref{epsilont}).
Clearly this function $V$ is also in $R(X)$.  It will be Holder if $f$ is Holder \cite{CLT}.

The second problem is: what we can say about selection of measures for the above class of potentials? We are going to determine the expressions of $\mu_{t}([0])$ and $\mu_{t}([1])$ (and, also for other cylinders), and,  later we will prove the following:

\begin{theorem}\label{teo2}
Let $f$ be a Walters's summable potential such that $a=c=0$, $b_{n}\equiv b$, $d_{n}\equiv d$, and assume the numbers $b,d,a_{n},c_{n}$ are strictly smaller than zero. If  $\sum_{j=2}^{\infty}a_{j} <b+d +  \sum_{j=2}^{\infty}c_{j}$, then $\mu_{t}\to \delta_{1^{\infty}}$ weakly*.
Also, $\frac{\mu_{t}([0])}{\mu_{t}([1])}\to 0$, exponentially fast.
\end{theorem}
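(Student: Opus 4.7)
The plan is to exploit the closed-form expressions for $h_t$ and $\nu_t$ that Walters's theorem furnishes in this class (and that the authors promise to derive for $\mu_t([0])$ and $\mu_t([1])$ in the sections preceding the statement), then compare the two masses asymptotically as $t\to\infty$, and finally deduce weak* convergence from the resulting exponential decay of the ratio.

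First, I would observe that under the simplifications $a=c=0$, $b_n\equiv b$, $d_n\equiv d$, the potential $f$ depends only on the length of the initial constant block of its argument: for $q\geq 2$ one has $f(0^{q}1z)=a_q$ and $f(1^{q}0z)=c_q$, independent of $z$, while $f(01z)=b$ and $f(10z)=d$. Consequently the Birkhoff sum along $x=0^{q}1z$ satisfies $S_qf(x)=a_2+\cdots+a_q+b$ (for $q\geq 2$), and symmetrically $S_qf(x)=c_2+\cdots+c_q+d$ along $x=1^{q}0z$. Iterating the dual relation $\lambda_t\nu_t([x_1])=\int e^{tf(x_1z)}\,d\nu_t(z)$ expresses $\nu_t([0^{q}1])$ and $\nu_t([1^{q}0])$ as $\lambda_t^{-q}$ times the exponential of such a Birkhoff sum times a bounded boundary factor. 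Combined with the parallel expansion of $h_t$ and with $\mu_t([0])=\sum_{q\geq 1}\int_{[0^{q}1]} h_t\,d\nu_t$ and $\mu_t([1])=\sum_{q\geq 1}\int_{[1^{q}0]} h_t\,d\nu_t$, this yields explicit convergent series for both masses.

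Second, I form the ratio. By Walters's summability, each series has a well-defined dominant rate as $q\to\infty$: $e^{t(b+\sum_{j\geq 2}a_j)}$ in the numerator and $e^{t(d+\sum_{j\geq 2}c_j)}$ in the denominator. The $\lambda_t^{-q}$ powers cancel between the two expressions (both series are summed to $q=\infty$ with the same $\lambda_t$-structure), and the remaining pre-factors and boundary terms grow only polynomially in $t$, since $\beta(f)=0$ forces $\lambda_t$ to grow subexponentially. Hence
\[
\frac{\mu_t([0])}{\mu_t([1])}=O\!\left(e^{t\bigl(\sum_{j\geq 2}a_j-b-d-\sum_{j\geq 2}c_j\bigr)}\right),
\]
which is exponentially small by hypothesis. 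Since $\mu_t([0])+\mu_t([1])=1$, this forces $\mu_t([0])\to 0$. For any cylinder $[x_1\cdots x_n]$ containing a $0$ at some position $i$, invariance of $\mu_t$ gives $\mu_t([x_1\cdots x_n])\leq \mu_t(\sigma^{-(i-1)}[0])=\mu_t([0])\to 0$, while complementation yields $\mu_t([1^n])\to 1$. Since cylinder masses determine a Borel probability on Bernoulli space, $\mu_t\to\delta_{1^{\infty}}$ weakly*.

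The main obstacle is the bookkeeping in the second step: ensuring that the $\lambda_t$-factors cancel cleanly between numerator and denominator, and that the boundary contributions (arising from the non-uniform behavior of $f$ near $0^\infty$ and $1^\infty$, and from the $q=1$ terms where the formulas $f(0^{q}1z)=a_q$ and $f(1^{q}0z)=c_q$ do not directly apply) do not introduce spurious exponential factors that could mask the targeted decay rate $\sum_{j\geq 2}a_j-b-d-\sum_{j\geq 2}c_j$. Once these corrections are shown to be at worst polynomial in $t$, the exponential decay, and hence both conclusions of the theorem, follow.
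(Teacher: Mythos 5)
Your overall strategy---derive explicit series for $\mu_t([0])$ and $\mu_t([1])$ from the Walters formulas, compare their ratio, and deduce weak$^*$ convergence from $\mu_t([0])\to 0$ (your final step, via invariance and complementation, is fine and is actually spelled out more completely than in the paper)---is the same as the paper's. The fatal problem is in the middle step, where you assert that once the ``dominant rates'' $e^{t(b+\sum_{j\geq2}a_j)}$ and $e^{t(d+\sum_{j\geq2}c_j)}$ are extracted, ``the remaining pre-factors and boundary terms grow only polynomially in $t$, since $\beta(f)=0$ forces $\lambda_t$ to grow subexponentially.'' This is false, and it is exactly where the mathematical content of the theorem lives. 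Writing $\epsilon_t:=P(tf)$, the tails of the relevant series behave like $\sum_{j\geq n_0}e^{-j\epsilon_t}\sim\epsilon_t^{-1}$ and $\sum_{j\geq n_0}(j+1)e^{-j\epsilon_t}\sim\epsilon_t^{-2}$ (the paper's Corollary \ref{cora}), and $\epsilon_t\to 0$ \emph{exponentially fast}, with rate $A=\lim_{t\to\infty}\frac1t\log\epsilon_t<0$. Hence these ``pre-factors'' are $e^{-tA+o(t)}$ and $e^{-2tA+o(t)}$: exponentially large, not polynomial, and they do not cancel between the two masses because $\mu_t([0])$ and $\mu_t([1])$ are each a \emph{ratio} of a $(j+1)$-weighted series to an unweighted one (the paper's $S_0(t)$ and $S_1(t)$), so the $\epsilon_t^{-2}$ versus $\epsilon_t^{-1}$ discrepancy survives. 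Identifying $A$ requires solving Walters's eigenvalue equation asymptotically (Lemma \ref{epsilont} and the Proposition of Section 3.1); under the stated hypothesis one gets $A=b+d+\sum_{j\geq2}c_j$, and only after inserting this value do the exponents combine to give the decay rate $\sum_{j\geq2}a_j-\bigl(b+d+\sum_{j\geq2}c_j\bigr)$.

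A symptom of the gap is an internal inconsistency in your own write-up: the ratio of the dominant rates you name is $e^{t(b-d+\sum_{j\geq2}a_j-\sum_{j\geq2}c_j)}$, whose exponent differs from the exponent $\sum_{j\geq2}a_j-b-d-\sum_{j\geq2}c_j$ in the bound you then assert by $2b$, and the former exponent is not forced to be negative by the hypothesis. The hidden exponential factors $e^{-tA}$ and $e^{-2tA}$ are precisely what bridge this discrepancy, so they cannot be dismissed as benign bookkeeping; without computing $A$ the claimed exponential decay does not follow.
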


Clearly an analogous result is true if $\sum_{j=2}^{\infty}c_{j} < b+d +  \sum_{j=2}^{\infty}a_{j}$. This result is related to the question considered by  Baraviera-Lopes-Leplaideur (\cite{BLL}). This show that the selection isn't a result only of the particular velocity of convergence of $a_{n}\to 0$ and $b_{n}\to 0$. In fact, the terms $a_{3},a_{4}...$ and $c_{3},c_{4},...$ aren't sufficiently to determine the selection of measure, because taking $a_{2}<<0$ or $c_{2}<<0$, we can get either case of inequality we want. For given $a=c=0$, $a_{3},a_{4},...<0$ and $c_{3},c_{4},...<0$, it is possible determine $b_{n},d_{n},a_{2},c_{2}$, in such way that $\mu_{t}\to \delta_{0^{\infty}}$ weakly*, and, $b'_{n},d'_{n},a'_{2},c'_{2}$, such that, $\mu_{t}\to \delta_{1^{\infty}}$ weakly*.

The exponential velocity of convergence given by Theorem $\ref{teo2}$, which also appears  in $(\cite{BLL})$, has applications in a better understanding of  Large Deviation Theory. It is known  that, if $f$ is Holder and has a unique maximizing measure $\mu_{\infty}$, then two calibrated subactions differ by a constant. In this case, from \cite{BLT}, the function $R_{+}:= \mu_{\infty}(f) + V\circ\sigma - V - f$, where $V$ is a calibrated subaction for $f$, is such that for any cylinder $K$:
\[\lim_{t\to\infty}\frac{1}{t}\log(\mu_{t}(K)) = -\inf_{x\in K}\sum_{j=0}^{\infty}R_{+}(\sigma^{j}(x)).\]

We call $I(x) = \sum_{j=0}^{\infty}R_{+}(\sigma^{j}(x))$ the deviation function.
It is a non-negative lower semi-continuous function which can attains in some points the value $\infty$. Moreover, $I$ is zero on the support of the maximizing probability.

It's natural to suppose that (even in the case the maximizing probability is not unique), if, it happens that there exists the limit $\frac{1}{t}\log(h_{t}) \to V$, then, defining $R_{+}:= \mu_{\infty}(f) + V\circ\sigma - V - f$, (where $\mu_{\infty}$ is any maximizing measure), we get that the above result should be also true ($\cite{M}$ proposition 47). Unfortunately,   this is false. Indeed, considering $f$ Holder, under the hypothesis of Theorem $\ref{teo2}$, we have
\[-\inf_{x\in [0]}\sum_{j=0}^{\infty}R_{+}(\sigma^{j}(x)) = -\sum_{j=0}^{\infty}R_{+}(\sigma^{j}(0^{\infty})) =0.\]
By the other hand by Theorem \ref{teo2}
\[\limsup_{t\to\infty}\frac{1}{t}\log(\mu_{t}([0])) = \limsup_{t\to\infty}\frac{1}{t}\log(\frac{\mu_{t}([0])}{\mu_{t}([1])}) <0.\]

 This means that if you want to estimate
$ \lim_{t\to\infty}\frac{1}{t}\log(\mu_{t}(K))$, for a general cylinder $K$,
the expression of the deviation function $I$, one can get from \cite{BLT} (under uniqueness assumption), does not apply to the case where there is no uniqueness of the maximizing probability (see considerations in section 3 in \cite{M}). The bottom line is: the explicit expression for the deviation function in this case should be different.

\section{Analysis of the eigenfunction and selection of subaction}

We start considering the case of positive temperature. Let $f$ a summable Walters's potential (so $tf$ is a summable Walters's potential, for each $t>0$). Then (following \cite{W} corollary 3.6 and theorem 1.1) there exists $h_{t}\in C(X)$, $h_{t}>0$, with $L_{tf}h_{t} = \lambda_{t} .h_{t}$, $\lambda_{t} = e^{P(tf)}>0$. Also, there exists a measure $\nu_{t}$ with $L_{tf}^{*}\nu_{t} = \lambda_{t}.\nu_{t}$. If we normalise $h_{t}$ by $h_{t}(0^{\infty})=1$, and, $\nu_{t}$ by $\int h_{t} \, d\nu_{t} =1$, then, the Gibbs measure $d\mu_{t} := h_{t}\,d\nu_{t}$ is an equilibrium state for $tf$.

Following Walters (page 1341 and corollary 3.5 \cite{W}) it is known that $P(tf)>\max\{ta,tc\}$ and
\scriptsize
\begin{equation}
 \left(\sum_{j=0}^{\infty}e^{td_{1+j}+t(a_{2}+...+a_{1+j}) - jP(tf)}\right)\left(\sum_{j=0}^{\infty}e^{tb_{1+j}+t(c_{2}+...+c_{1+j}) - jP(tf)}\right)=e^{2P(tf)}. \label{walters1}
 \end{equation}
 \normalsize

By abuse of notation, in the above expression, when $j=0$, the corresponding terms are just $e^{td_{1}}$ and $e^{tb_{1}}$, respectively.

\bigskip

 Moreover, the eigenfunction $h_{t}$ of a summable Walters's Potential $tf$ is given by \cite{W}:
\[h_{t}(0^{\infty}) = 1.\]
\[h_{t}(1^{\infty}) =: \beta_{\infty,t} := \frac{e^{tb}(e^{P(tf)}-e^{ta})}{e^{td}(e^{P(tf)} - e^{tc})e^{P(tf)}}\left(\sum_{j=0}^{\infty}e^{td_{1+j}+t(a_{2}+...+a_{1+j}) - jP(tf)}\right) \]
\[h_{t}(0^{q}1) =: \alpha_{q,t} :=  \frac{(e^{P(tf)}-e^{ta})}{e^{td}e^{P(tf)}}\left(\sum_{j=0}^{\infty}e^{td_{q+j}+t(a_{q+1}+...+a_{q+j}) - jP(tf)}\right)\]
\[h_{t}(1^{q}0) =: \beta_{q,t} :=  \frac{\beta_{\infty,t}(e^{P(tf)}-e^{tc})}{e^{tb}e^{P(tf)}}\left(\sum_{j=0}^{\infty}e^{tb_{q+j}+t(c_{q+1}+...+c_{q+j}) - jP(tf)}\right).\]

 Remember that $\beta(f) = \sup_{\mu \, \in {\cal M}}\int f \, d\mu$. In the present setting $\beta(f) = a=c$.
We denote $\epsilon_{t}:=P(tf) - t\beta(f).$ Under the assumption ``any maximizing measure is of the form  $s\,\delta_{0^{\infty}} + (1-s)\,\delta_{1^{\infty}}, \ \ s \in [0,1]$'' (note that $\epsilon_{t}>0$) we also have  $\epsilon_{t}\to 0$ (the supremum of the entropy of the maximizing invariant probabilities is zero \cite{CoG}).
On the analysis of the eventual existence of the limit $\frac{1}{t}\log(h_{t})$, it's important to understand first what happens with $\frac{1}{t}\log(\epsilon_{t})$.
\bigskip

\begin{lemma} \label{e}
Suppose $f$ is a summable Walters's potential, $a=c=\beta(f)$, and that there exists
\[\lim_{t_{i}\to\infty}\frac{1}{t_{i}}\log(\epsilon_{t_{i}}).\] Then, for fixed $q$ we get
\begin{align*}
&\frac{1}{t_{i}}\log  \left(\sum_{j=0}^{\infty}e^{t_{i}d_{q+j} +t_{i}(a_{q+1}+...+a_{q+j}) - jP(t_{i}f)}\right)\\
&= \max\{\sup_{j\geq 0}(d_{q+j} + a_{q+1}+...+a_{q+j} - j\beta(f)), d+ \sum_{j=1}^{\infty}(a_{q+j}-\beta(f)) -\lim_{t_{i}\to\infty}\frac{1}{t_{i}}\log\epsilon_{t_{i}}\}.
\end{align*}
and
\begin{align*}
&\frac{1}{t_{i}}\log  \left(\sum_{j=0}^{\infty}e^{t_{i}b_{q+j} +t_{i}(c_{q+1}+...+c_{q+j}) - jP(t_{i}f)}\right)\\
&= \max\{\sup_{j\geq 0}(b_{q+j} + c_{q+1}+...+c_{q+j} - j\beta(f)), b+ \sum_{j=1}^{\infty}(c_{q+j}-\beta(f)) -\lim_{t_{i}\to\infty}\frac{1}{t_{i}}\log\epsilon_{t_{i}}\}.
\end{align*}
\end{lemma}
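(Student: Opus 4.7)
The plan is to apply Laplace-type asymptotics to the sum after extracting the pressure gap $\epsilon_{t_i}$. Writing $P(t_i f) = t_i \beta(f) + \epsilon_{t_i}$ and setting $\psi_j := d_{q+j} + a_{q+1} + \cdots + a_{q+j} - j\beta(f)$, the quantity inside the logarithm becomes $S(t_i) := \sum_{j \geq 0} e^{t_i \psi_j - j \epsilon_{t_i}}$. The summable Walters hypothesis makes $\sum_{k \geq 1}(a_{q+k} - \beta(f))$ convergent and forces $d_{q+j} \to d$, so $\psi_j \to \psi_\infty := d + \sum_{k \geq 1}(a_{q+k} - \beta(f))$; in particular $M := \sup_{j \geq 0}\psi_j$ is finite and $M \geq \psi_\infty$. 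Writing $L := -\lim_{i \to \infty} \frac{1}{t_i}\log \epsilon_{t_i} \in [0, +\infty]$ (which exists by hypothesis and is nonnegative because $\epsilon_{t_i} > 0$ tends to zero), the claim reduces to $\lim_{i \to \infty} \frac{1}{t_i}\log S(t_i) = \max\{M,\, \psi_\infty + L\}$.

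For the upper bound I fix $\delta > 0$, choose $N_\delta$ with $\psi_j \leq \psi_\infty + \delta$ for all $j \geq N_\delta$, and split $S(t_i)$ at $N_\delta$. The initial block consists of at most $N_\delta$ terms, each bounded by $e^{t_i M}$; the tail is controlled by $e^{t_i(\psi_\infty + \delta)} \sum_{j \geq N_\delta} e^{-j \epsilon_{t_i}} \leq e^{t_i(\psi_\infty + \delta)}/(1 - e^{-\epsilon_{t_i}})$. Since $1 - e^{-\epsilon_{t_i}} \sim \epsilon_{t_i}$ as $\epsilon_{t_i} \to 0$, taking $\frac{1}{t_i}\log$, passing to the $\limsup$ in $t_i$, and finally sending $\delta \to 0$ gives $\limsup_{i \to \infty} \frac{1}{t_i}\log S(t_i) \leq \max\{M,\, \psi_\infty + L\}$.

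For the matching lower bound, one side is immediate by keeping a single term: for each fixed $j$, $S(t_i) \geq e^{t_i \psi_j - j \epsilon_{t_i}}$, whence $\liminf_{i \to \infty} \frac{1}{t_i}\log S(t_i) \geq \psi_j$, and taking sup over $j$ yields $\geq M$. For the other side, use $\psi_j \geq \psi_\infty - \delta$ for $j \geq N_\delta$ and bound the tail below by $e^{t_i(\psi_\infty - \delta)} e^{-N_\delta \epsilon_{t_i}}/(1 - e^{-\epsilon_{t_i}})$; taking $\frac{1}{t_i}\log$, letting $t_i \to \infty$, and then $\delta \to 0$ produces $\psi_\infty + L$. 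The second displayed identity, with $b$'s and $c$'s replacing $d$'s and $a$'s, is proved by the identical argument after relabelling. The main care point will be the interplay between the $\delta$-approximation (which fixes $N_\delta$) and the asymptotic $1 - e^{-\epsilon_{t_i}} \sim \epsilon_{t_i}$, since this asymptotic is precisely what produces the $L$ contribution to the second entry of the maximum and requires the order of limits $t_i \to \infty$ before $\delta \to 0$.
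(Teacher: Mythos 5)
Your proof is correct and follows essentially the same route as the paper's: split the sum at a cutoff index where $d_{q+j}$ and the partial sums of $a_{q+k}-\beta(f)$ are within $\delta$ of their limits, bound the finite head by the maximum of finitely many exponentials and the tail by a geometric series, and extract the $-\lim\frac{1}{t_i}\log\epsilon_{t_i}$ contribution from $\bigl(1-e^{-\epsilon_{t_i}}\bigr)^{-1}$, with the limits taken in the same order ($t_i\to\infty$ before $\delta\to 0$). No substantive difference from the paper's argument.
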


\begin{proof}
By abuse of notation we write $t$ instead of $t_i$. We will only prove the first equality because the second case is analogous.
We fix $\epsilon >0$. By assumption, the series $\sum_{n\geq 2}(a_{n}-a)$ is convergent, so there exist $n$ such that for all $j\geq n$, $$|d_{j}-d|\,<\epsilon/2, \ \ and \ \ |a_{n}+...+a_{j} - (1+j-n)a| <\epsilon/2.$$
It follows that

\begin{align*}
\frac{1}{t}\log & \left(\sum_{j=0}^{\infty}e^{td_{q+j}}\left(\frac{e^{ta_{q+1}+...+ta_{q+j}}}{(\lambda(t))^{j}}\right)\right)\\
&= \frac{1}{t}\log  \left(\sum_{j=0}^{\infty}e^{td_{q+j}+t(a_{q+1}+...+a_{q+j})-tj\beta(f) - j\epsilon_{t}}\right)\\
&= \frac{1}{t}\log \left(\sum_{j=0}^{\infty}e^{td_{q+j}+t(a_{q+1}+...+a_{q+j})-tja - j\epsilon_{t}}\right)\\
&= \frac{1}{t}\log \left(\sum_{j=0}^{n}e^{td_{q+j}+t(a_{q+1}+...+a_{q+j})-tja - j\epsilon_{t}} + \sum_{j=n+1}^{\infty}e^{td_{q+j}+t(a_{q+1}+...a_{q+j})-tja - j\epsilon_{t}} \right).\\
\end{align*}

Moreover,
\begin{align*}
&\limsup_{t\to\infty}\frac{1}{t}\log \left(\sum_{j=0}^{n}e^{td_{q+j}+t(a_{q+1}+...+a_{q+j})-tja - j\epsilon_{t}} + \sum_{j=n+1}^{\infty}e^{td_{q+j}+t(a_{q+1}+...+a_{q+j})-tja - j\epsilon_{t}} \right)\\
&\leq \limsup_{t\to\infty}\frac{1}{t}\log \left(\sum_{j=0}^{n}e^{td_{q+j}+t(a_{q+1}+...+a_{q+j})-tja - j\epsilon_{t}} + e^{td+t(a_{q+1}+...+a_{q+n})-tn\beta(f)}\sum_{j=n+1}^{\infty}e^{t\epsilon - j\epsilon_{t}} \right)\\
&\leq \limsup_{t\to\infty}\frac{1}{t}\log \left(\sum_{j=0}^{n}e^{td_{q+j}+t(a_{q+1}+...+a_{q+j})-tja - j\epsilon_{t}} + e^{td+t(a_{q+1}+...+a_{q+n})-tn\beta(f)+t\epsilon}\frac{e^{-(n+1)\epsilon_{t}}}{1-e^{-\epsilon_{t}}} \right)\\
&= \max\{\sup_{0\leq j\leq n}(d_{q+j} + a_{q+1}+...+a_{q+j} - j\beta(f)), d + \epsilon +\sum_{j=q+1}^{q+n} (a_{j}-\beta(f)) -\lim_{t\to\infty}\frac{1}{t}\log\epsilon_{t}\}.
\end{align*}

This is true for any $\epsilon>0$, and $n$ sufficiently large, so
\begin{align*}
&\limsup_{t\to\infty}\frac{1}{t}\log  \left(\sum_{j=0}^{\infty}e^{td_{q+j}}\left(\frac{e^{ta_{q+1}+...+ta_{q+j}}}{(\lambda(t))^{j}}\right)\right)\\
&\leq \max\{\sup_{j}(d_{q+j} + a_{q+1}+...+a_{q+j} - j\beta(f)), d+ \sum_{j=1}^{\infty}(a_{q+j}-\beta(f)) -\lim_{t\to\infty}\frac{1}{t}\log\epsilon_{t}\}.
\end{align*}

We also have that
\begin{align*}
&\liminf_{t\to\infty}\frac{1}{t}\log  \left(\sum_{j=0}^{\infty}e^{td_{q+j}}\left(\frac{e^{ta_{q+1}+...+ta_{q+j}}}{(\lambda(t))^{j}}\right)\right)\\
&= \liminf_{t\to\infty}\frac{1}{t}\log \left(\sum_{j=0}^{n}e^{td_{q+j}+t(a_{q+1}+...+a_{q+j})-tja - j\epsilon_{t}} + \sum_{j=n+1}^{\infty}e^{td_{q+j}+t(a_{q+1}+...a_{q+j})-tja - j\epsilon_{t}} \right)\\
&\geq \liminf_{t\to\infty}\frac{1}{t}\log \left(\sum_{j=0}^{n}e^{td_{q+j}+t(a_{q+1}+...+a_{q+j})-tja - j\epsilon_{t}} + e^{td+t(a_{q+1}+...+a_{q+n})-tn\beta(f)}\sum_{j=n+1}^{\infty}e^{-t\epsilon - j\epsilon_{t}} \right)\\
&\geq \liminf_{t\to\infty}\frac{1}{t}\log \left(\sum_{j=0}^{n}e^{td_{q+j}+t(a_{q+1}+...+a_{q+j})-tja - j\epsilon_{t}} + e^{td+t(a_{q+1}+...+a_{q+n})-tn\beta(f)-t\epsilon}\frac{e^{-(n+1)\epsilon_{t}}}{1-e^{-\epsilon_{t}}} \right)\\
&= \max\{\sup_{0\leq j\leq n}(d_{q+j} + a_{q+1}+...+a_{q+j} - j\beta(f)), d - \epsilon+ \sum_{j=q+1}^{q+n} (a_{j}-\beta(f)) -\lim_{t\to\infty}\frac{1}{t}\log\epsilon_{t}\}.
\end{align*}

This is true for any $\epsilon>0$, and $n$ sufficiently large, so
\begin{align*}
&\lim_{t\to\infty}\frac{1}{t}\log \left(\sum_{j=0}^{\infty}e^{td_{q+j}}\left(\frac{e^{ta_{q+1}+...+ta_{q+j}}}{(\lambda(t))^{j}}\right)\right)\\
&= \max\{\sup_{j}(d_{q+j} + a_{q+1}+...+a_{q+j} - j\beta(f)), d+ \sum_{j=1}^{\infty}(a_{q+j}-\beta(f)) -\lim_{t\to\infty}\frac{1}{t}\log\epsilon_{t}\}.
\end{align*}
\end{proof}

\begin{lemma}\label{epsilont}
Suposse $f$ is a summable Walters's potential,  $a=c=\beta(f)$, and any maximizing measure is of the form  $s\,\delta_{0^{\infty}} + (1-s)\,\delta_{1^{\infty}}, \ \ s \in [0,1]$.  Then there exists
\[A:=\lim_{t\to\infty}\frac{1}{t}\log(\epsilon_{t}).\]
Also, $A$ is the unique number $\leq 0$ which satisfies the bellow equation on $y$:
\footnotesize
\begin{align}
2\beta(f) =& \max\{ \sup_{j\geq 0}(d_{1+j} + a_{2}+...+a_{1+j} - j\beta(f)), d+ \sum_{j=1}^{\infty}(a_{1+j}-\beta(f)) -y\}+ \nonumber \\
& + \max\{\sup_{j\geq 0}(b_{1+j} + c_{2}+...+c_{1+j} - j\beta(f)), b+ \sum_{j=1}^{\infty}(c_{1+j}-\beta(f)) -y\}. \label{eqA}
\end{align}
\normalsize
Moreover, $A=0$, or
\[A = A_{1}:= \frac{d+b}{2} + \frac{\sum_{j=1}^{\infty}(a_{1+j}-\beta(f))}{2} + \frac{\sum_{j=1}^{\infty}(c_{1+j}-\beta(f))}{2} -\beta(f),\]
or
\[A = A_{2}:= b + \sum_{j=1}^{\infty}(c_{1+j}-\beta(f)) + \sup_{j\geq 0}(d_{1+j} + a_{2}+...+a_{1+j} - j\beta(f)) - 2\beta(f),\]
or
\[A = A_{3}:= d + \sum_{j=1}^{\infty}(a_{1+j}-\beta(f)) + \sup_{j\geq 0}(b_{1+j} + c_{2}+...+c_{1+j} - j\beta(f)) - 2\beta(f).\]
\end{lemma}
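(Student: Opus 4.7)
The plan is to start from Walters's identity (\ref{walters1}), rewritten as
\[
\frac{1}{t}\log\Sigma_{1}(t)+\frac{1}{t}\log\Sigma_{2}(t)=\frac{2P(tf)}{t}=2\beta(f)+\frac{2\epsilon_{t}}{t},
\]
where $\Sigma_{1}(t),\Sigma_{2}(t)$ denote the two infinite sums appearing in (\ref{walters1}). The right-hand side converges to $2\beta(f)$ since $\epsilon_{t}\to 0$. The idea is to extract the limit of $\tfrac{1}{t}\log\epsilon_{t}$ via Lemma~\ref{e}: along any subsequence with $\tfrac{1}{t_{i}}\log\epsilon_{t_{i}}\to y$, the lemma computes the left-hand side as $F(y):=\max\{S_{1},D_{1}-y\}+\max\{S_{2},D_{2}-y\}$, where $S_{k},D_{k}$ abbreviate the $\sup_{j\ge 0}$ and the $j\to\infty$ limit of the sequences appearing in (\ref{eqA}). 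Hence every subsequential limit $y$ of $\tfrac{1}{t}\log\epsilon_{t}$ must satisfy $F(y)=2\beta(f)$, i.e.\ equation (\ref{eqA}).

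To secure a finite accumulation point, I bound $\tfrac{1}{t}\log\epsilon_{t}$. The upper bound $\le 0$ (for large $t$) is immediate from $\epsilon_{t}\to 0^{+}$. For the lower bound, suppose $\tfrac{1}{t_{i}}\log\epsilon_{t_{i}}\to -\infty$ along a subsequence; repeating the argument of Lemma~\ref{e} shows that the term $-\lim\tfrac{1}{t_{i}}\log\epsilon_{t_{i}}$ inside the max would drive $\tfrac{1}{t_{i}}\log\Sigma_{1}(t_{i})\to+\infty$, contradicting the finite limit $2\beta(f)$ of the right-hand side. Thus $\tfrac{1}{t}\log\epsilon_{t}$ is bounded in $(-\infty,0]$, and every accumulation point is a solution $y\le 0$ of $F(y)=2\beta(f)$.

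It then remains to show that $F(y)=2\beta(f)$ has a unique solution in $(-\infty,0]$, and to identify it. Since $S_{k}$ is the supremum of a convergent sequence whose limit is $D_{k}$, one has $S_{k}\ge D_{k}$, so $D_{k}-S_{k}\le 0$. Consequently $F$ is a continuous, non-increasing, piecewise linear function with slopes in $\{-2,-1,0\}$: slope $-2$ on $\{y<\min(D_{1}-S_{1},D_{2}-S_{2})\}$, slope $-1$ on the middle strip, and slope $0$ (value $S_{1}+S_{2}$) on the flat region $\{y\ge\max(D_{1}-S_{1},D_{2}-S_{2})\}$. Solving $F(y)=2\beta(f)$ in each of the four linear pieces produces the four candidate values: both $D_{k}-y$ winning gives $A_{1}=\tfrac{D_{1}+D_{2}}{2}-\beta(f)$; the mixed cases give $A_{2}=S_{1}+D_{2}-2\beta(f)$ and $A_{3}=D_{1}+S_{2}-2\beta(f)$; and the flat case (which requires $S_{1}+S_{2}=2\beta(f)$) gives $A=0$. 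Consistency of each candidate with lying in its own linear piece selects exactly one of the four values, yielding uniqueness.

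The main obstacle is the degenerate flat-region case: when $S_{1}+S_{2}=2\beta(f)$, the slope-$0$ piece of $F$ a priori supplies a whole interval of solutions, and one must show that the only solution compatible with $y\le 0$ is $y=0$. The key is to verify that whenever the flat region extends strictly below $0$, one of $A_{1},A_{2},A_{3}$ lies in its own linear piece and is itself $\le 0$, so the true solution is in fact on a strictly decreasing piece; otherwise $\max(D_{1}-S_{1},D_{2}-S_{2})=0$ and the flat region intersects $(-\infty,0]$ only at $0$. Once the uniqueness of $A\le 0$ is established, all subsequential limits of $\tfrac{1}{t}\log\epsilon_{t}$ must coincide with $A$, so the limit exists and equals one of $0,A_{1},A_{2},A_{3}$.
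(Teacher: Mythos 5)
Your overall architecture (take $\tfrac1t\log$ of Walters's identity, apply Lemma \ref{e} along subsequences, rule out $-\infty$, and then study the piecewise linear, non-increasing function $F(y)=\max\{S_1,D_1-y\}+\max\{S_2,D_2-y\}$ with slopes $-2,-1,0$) matches the paper's proof and correctly produces the four candidates $0,A_1,A_2,A_3$; your explicit lower-bound argument against $\tfrac1t\log\epsilon_t\to-\infty$ is in fact more careful than what the paper writes. However, there is a genuine gap exactly where you flag ``the main obstacle,'' and your proposed resolution does not work. Suppose $S_1+S_2=2\beta(f)$ and the flat region $\{y\ge \max(D_1-S_1,D_2-S_2)\}$ extends strictly below $0$. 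Then $F\equiv 2\beta(f)$ on a nondegenerate interval of $(-\infty,0]$ and $F(y)>2\beta(f)$ strictly below that interval, so the solution set of $F(y)=2\beta(f)$ in $(-\infty,0]$ is a whole interval: uniqueness genuinely fails at the level of the equation, and no candidate $A_k$ can lie on a strictly decreasing piece and still satisfy $F(A_k)=2\beta(f)$. Your claim that ``the true solution is in fact on a strictly decreasing piece'' is therefore impossible in this configuration; the equation alone cannot save you.

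What is missing is the only place in the whole lemma where the dynamical hypothesis enters. If the flat value equals $2\beta(f)$ and both $D_1-S_1<0$ and $D_2-S_2<0$, then each supremum $S_k$ is attained at a finite index (since the defining sequences converge to $D_k<S_k$), say $j_0$ and $j_1$, and $S_1+S_2=2\beta(f)$ becomes
\[
\beta(f)=\frac{d_{1+j_0}+a_2+\cdots+a_{1+j_0}+b_{1+j_1}+c_2+\cdots+c_{1+j_1}}{j_0+j_1+2}
=\frac{f^{j_0+j_1+2}\bigl((0^{j_0+1}1^{j_1+1})^{\infty}\bigr)}{j_0+j_1+2},
\]
i.e.\ the periodic orbit $(0^{j_0+1}1^{j_1+1})^{\infty}$ supports a maximizing measure. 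This contradicts the hypothesis that every maximizing measure is a convex combination of $\delta_{0^\infty}$ and $\delta_{1^\infty}$. Hence in the degenerate case at least one of $S_k=D_k$ holds, so $\max(D_1-S_1,D_2-S_2)=0$, the flat region meets $(-\infty,0]$ only at $y=0$, and uniqueness is restored. Without this periodic-orbit argument your proof never uses the assumption on the maximizing measures, and the statement is false without that assumption, so the gap is essential rather than cosmetic.
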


\begin{proof}

Let $A$ be an accumulation point of $\lim_{t\to\infty}\frac{1}{t}\log(\epsilon_{t})$. Suppose the accumulation occurs for a certain sequence $t_{i}$.
We know by (pag 1342 \cite{W}) that
\scriptsize
\[ \left(\sum_{j=0}^{\infty}e^{td_{1+j}+t(a_{2}+...+a_{1+j}) - jP(tf)}\right)\left(\sum_{j=0}^{\infty}e^{tb_{1+j}+t(c_{2}+...+c_{1+j}) - jP(tf)}\right)=e^{2P(tf)}. \ \ \ \ (*)\]
\normalsize
So taking $\lim_{t_{i}\to\infty}\frac{1}{t_{i}}\log$ in both sides, and applying the above lemma, we have:
\begin{align*}
2\beta(f) =& \max\{  \sup_{j\geq 0}(d_{1+j} + a_{2}+...+a_{1+j} - j\beta(f)), d+ \sum_{j=1}^{\infty}(a_{1+j}-\beta(f)) -A\} \\
& + \max\{\sup_{j\geq 0}(b_{1+j} + c_{2}+...+c_{1+j} - j\beta(f)), b+ \sum_{j=1}^{\infty}(c_{1+j}-\beta(f)) -A\}.
\end{align*}

Then, any accumulation point of $\frac{1}{t}\log(\epsilon_{t})$ satisfies the equation $(\ref{eqA})$, and it is also small, or equal to  zero.

We are going to prove that there is a unique number small, or equal to zero, satisfying the equation $(\ref{eqA})$. \bigskip

\textbf{First case:}
\[2\beta(f) =( \sup_{j\geq 0}\{(d_{1+j} + a_{2}+...+a_{1+j} - j\beta(f))\})+( \sup_{j\geq 0}\{(b_{1+j} + c_{2}+...+c_{1+j} - j\beta(f))\}) .\]
We will show that, on this case, $A=0$ is the unique value smaller than zero which satisfies $(\ref{eqA})$.
If
\[\sup_{j\geq 0}d_{1+j} + a_{2}+...+a_{1+j} - j\beta(f) = d+ \sum_{j=1}^{\infty}(a_{1+j}-\beta(f)),\]
or
\[\sup_{j\geq 0}b_{1+j} + c_{2}+...+c_{1+j} - j\beta(f) = b+ \sum_{j=1}^{\infty}(c_{1+j}-\beta(f)),\]
then, clearly, the claim is true.
If this does not happen, then there exist $j_{0}$ and $j_{1}$ such that
\[\sup_{j\geq 0}(d_{1+j} + a_{2}+...+a_{1+j} - j\beta(f)) = d_{1+j_{0}} + a_{2}+...+a_{1+j_{0}} - j_{0}\beta(f),\]
and,
\[ \sup_{j\geq 0}(b_{1+j} + c_{2}+...+c_{1+j} - j\beta(f)) = b_{1+j_{1}} + c_{2}+...+c_{1+j_{1}} - j_{1}\beta(f).\]
So, we have
\[2\beta(f) = d_{1+j_{0}} + a_{2}+...+a_{1+j_{0}} - j_{0}\beta(f) + b_{1+j_{1}} + c_{2}+...+c_{1+j_{1}} - j_{1}\beta(f).\]
It follows that
\[\beta(f) = \frac{d_{1+j_{0}} + a_{2}+...+a_{1+j_{0}} + b_{1+j_{1}} + c_{2}+...+c_{1+j_{1}}}{j_{0}+j_{1}+2} = \frac{f^{j_{0}+j_{1}+2}((0^{j_{0}+1}1^{j_{1}+1})^{\infty})}{j_{0}+j_{1}+2}.\]

Therefore,  there exists a large periodic orbit which attains the maximal value.
This can not happen because any maximizing is of the form $s\,\delta_{0^{\infty}} + (1-s)\,\delta_{1^{\infty}}, \ \ s \in [0,1]$. We can not have other periodic orbits getting such supremum. Then, on this  case, the claim is true.
\bigskip

\textbf{Second case:}%
\[2\beta(f) > ( \sup_{j\geq 0}(d_{1+j} + a_{2}+...+a_{1+j} - j\beta(f)))+( \sup_{j\geq 0}(b_{1+j} + c_{2}+...+c_{1+j} - j\beta(f))) .\]
On this case, $y=0$  is not a solution of $(\ref{eqA})$. We are going to prove that, if $y_{1}$ is a solution of $(\ref{eqA})$, and, $y_{1}<y_{2}$, then $y_{2}$ is not a solution of $(\ref{eqA})$. Indeed,

\begin{align*}
(d+ & \sum_{j=1}^{\infty}(a_{1+j}-\beta(f)) -y_{2}) + ( b+ \sum_{j=1}^{\infty}(c_{1+j}-\beta(f)) -y_{2})\\
& <  (d+ \sum_{j=1}^{\infty}(a_{1+j}-\beta(f)) -y_{1}) + ( b+ \sum_{j=1}^{\infty}(c_{1+j}-\beta(f)) -y_{1})\\
&\leq \max\{  \sup_{j\geq 0}(d_{1+j} + a_{2}+...+a_{1+j} - j\beta(f)), d+ \sum_{j=1}^{\infty}(a_{1+j}-\beta(f)) -y_{1}\} \\
& + \max\{\sup_{j\geq 0}(b_{1+j} + c_{2}+...+c_{1+j} - j\beta(f)), b+ \sum_{j=1}^{\infty}(c_{1+j}-\beta(f)) -y_{1}\} = 2\beta(f),
\end{align*}
and,
\begin{align*}
(\sup_{j\geq 0}&(d_{1+j} + a_{2}+...+a_{1+j} - j\beta(f))) + ( b+ \sum_{j=1}^{\infty}(c_{1+j}-\beta(f)) -y_{2})\\
&< (\sup_{j\geq 0}(d_{1+j} + a_{2}+...+a_{1+j} - j\beta(f))) + ( b+ \sum_{j=1}^{\infty}(c_{1+j}-\beta(f)) -y_{1})\\
&\leq \max\{ \sup_{j\geq 0}(d_{1+j} + a_{2}+...+a_{1+j} - j\beta(f)), d+ \sum_{j=1}^{\infty}(a_{1+j}-\beta(f)) -y_{1}\} \\
& + \max\{\sup_{j\geq 0}(b_{1+j} + c_{2}+...+c_{1+j} - j\beta(f)), b+ \sum_{j=1}^{\infty}(c_{1+j}-\beta(f)) -y_{1}\} = 2\beta(f),
\end{align*}
and,
\begin{align*}
(d+ \sum_{j=1}^{\infty}&(a_{1+j}-\beta(f)) -y_{2}) + ( \sup_{j\geq 0}(b_{1+j} + c_{2}+...+c_{1+j} - j\beta(f)))\\
&< (d+ \sum_{j=1}^{\infty}(a_{1+j}-\beta(f)) -y_{1}) + ( \sup_{j\geq 0}(b_{1+j} + c_{2}+...+c_{1+j} - j\beta(f)))\\
&\leq \max\{ \sup_{j\geq 0}(d_{1+j} + a_{2}+...+a_{1+j} - j\beta(f)), d+ \sum_{j=1}^{\infty}(a_{1+j}-\beta(f)) -y_{1}\} \\
& + \max\{\sup_{j\geq 0}(b_{1+j} + c_{2}+...+c_{1+j} - j\beta(f)), b+ \sum_{j=1}^{\infty}(c_{1+j}-\beta(f)) -y_{1}\} = 2\beta(f).
\end{align*}
These three inequalities, plus the hypothesis of our second case, shows that $y_{2}$ is not a solution of $(\ref{eqA})$.
\bigskip

We can make a more precise analysis, and conclude that $A=0$, or,
alternatively, $A$ will take one of the three values given on the claim of the lemma:\newline
\newline

\textbf{First case:}
\[2\beta(f) = (d+ \sum_{j=1}^{\infty}(a_{1+j}-\beta(f)) -A) + ( b+ \sum_{j=1}^{\infty}(c_{1+j}-\beta(f)) -A),\]
and, so
\[A = \frac{d+b}{2} + \frac{\sum_{j=1}^{\infty}(a_{1+j}-\beta(f))}{2} + \frac{\sum_{j=1}^{\infty}(c_{1+j}-\beta(f))}{2} -\beta(f).\]
\newline

\textbf{Second case:}
\[2\beta(f) = (\sup_{j\geq 0}(d_{1+j} + a_{2}+...+a_{1+j} - j\beta(f))) + ( b+ \sum_{j=1}^{\infty}(c_{1+j}-\beta(f)) -A),\]
and, so
\[A = b + \sum_{j=1}^{\infty}(c_{1+j}-\beta(f)) + \sup_{j\geq 0}(d_{1+j} + a_{2}+...+a_{1+j} - j\beta(f)) - 2\beta(f).\]


\textbf{Third case:}
\[2\beta(f) = (d+ \sum_{j=1}^{\infty}(a_{1+j}-\beta(f)) -A) + ( \sup_{j\geq 0}(b_{1+j} + c_{2}+...+c_{1+j} - j\beta(f))),\]
and, so
\[A = d + \sum_{j=1}^{\infty}(a_{1+j}-\beta(f)) + \sup_{j\geq 0}(b_{1+j} + c_{2}+...+c_{1+j} - j\beta(f)) - 2\beta(f).\]

\end{proof}

\subsection{Proof of Theorem $\ref{teo1}$:}

The first part is contained in Lemma $\ref{epsilont}$. We have $V(0^{\infty})=0$,  because $h_{t}(0^{\infty})=1$.
We also have
\begin{align*}
V(&1^{\infty}) = \lim_{t\to\infty}\frac{1}{t}\log(\beta_{\infty,t}) \\
&=  \lim_{t\to\infty}\frac{1}{t} \log \left(\frac{e^{tb}(e^{P(tf)}-e^{ta})}{e^{td}(e^{P(tf)} - e^{tc})e^{P(tf)}}\right)\\
& \, + \lim_{t\to\infty}\frac{1}{t} \log \left(\sum_{j=0}^{\infty}e^{td_{1+j}+t(a_{2}+...+a_{1+j}) - jP(tf)}\right)\\
&= b - d  - \beta(f) + \max\{\sup_{j}(d_{1+j} + a_{2}+...+a_{1+j} - j\beta(f)), d+ \sum_{j=1}^{\infty}(a_{1+j}-\beta(f)) -A\}.
\end{align*}

Moreover,

\begin{align*}
V(& 0^{q}1z) = \lim_{t\to\infty}\frac{1}{t}\log(\alpha_{q,t}) \\
&=  \lim_{t\to\infty}\frac{1}{t} \log \left(\frac{(e^{P(tf)}-e^{ta})}{e^{td}e^{P(tf)}}\left(\sum_{j=0}^{\infty}e^{td_{q+j}+t(a_{q+1}+...+a_{q+j}) - jP(tf)}\right)\right)\\
&=  \lim_{t\to\infty}\frac{1}{t} \log \left(\frac{(e^{P(tf)}-e^{ta})}{e^{td}e^{P(tf)}}\right)\\
&\, + \lim_{t\to\infty}\frac{1}{t} \log \left(\sum_{j=0}^{\infty}e^{td_{q+j}+t(a_{q+1}+...+a_{q+j}) - jP(tf)}\right)\\
&= \lim_{t\to\infty}\frac{1}{t} \log (1-e^{-\epsilon_{t}}) - d \\
&\, + \max\{\sup_{j}(d_{q+j} + a_{q+1}+...+a_{q+j} - j\beta(f)), d+ \sum_{j=1}^{\infty}(a_{q+j}-\beta(f)) -A\}\\
&= A - d +  \max\{\sup_{j}(d_{q+j} + a_{q+1}+...+a_{q+j} - j\beta(f)), d+ \sum_{j=1}^{\infty}(a_{q+j}-\beta(f)) -A\},
\end{align*}
and, finally
\begin{align*}
 V(& 1^{q}0z) = \lim_{t\to\infty}\frac{1}{t} \log( \beta_{q,t})\\
&= \lim_{t\to\infty}\frac{1}{t} \log\left(\frac{\beta_{\infty}(t)(e^{P(tf)}-e^{tc})}{e^{tb}e^{P(tf)}}\right)\\
& \, + \lim_{t\to\infty}\frac{1}{t} \log \left(\sum_{j=0}^{\infty}e^{tb_{q+j}+t(c_{q+1}+...+c_{q+j}) - jP(tf)}\right)\\
&= -d -\beta(f) +A \\
& \ \ + \max\{\sup_{j}(d_{1+j} + a_{2}+...+a_{1+j} - j\beta(f)), d+ \sum_{j=1}^{\infty}(a_{1+j}-\beta(f)) -A\}\\
& \ \ + \max\{\sup_{j}(b_{q+j} + c_{q+1}+...+c_{q+j} - j\beta(f)), b+ \sum_{j=1}^{\infty}(c_{q+j}-\beta(f)) -A\}.
\end{align*}

We remark that it's possible verify explicitly that this function $V$ is a calibrated subaction for $f$.

\section{Analysis of the Gibbs state and  selection of an invariant measure}

First, we are going to show:

\begin{proposition} \label{quo}
Let $f$ be a Walters's summable Potential, and, $\mu_{t}$ the Gibbs measure of $tf$. Then
\[\mu_{t}([0]) = \frac{S_{0}(t)}{S_{0}(t)+S_{1}(t)}, \ \ and, \ \  \mu_{t}([1]) = \frac{S_{1}(t)}{S_{0}(t)+S_{1}(t)},\]
where
\[S_{0}(t):= \frac{ e^{td_{1}} + \sum_{j=1}^{\infty}(j+1)e^{td_{1+j} +t(a_{2}+...+a_{1+j})-jP(tf)}}{e^{td_{1}}+\sum_{j=1}^{\infty}e^{td_{1+j} +t(a_{2}+...+a_{1+j})-jP(tf)}},\]
\[S_{1}(t) = \frac{ e^{tb_{1}}+\sum_{j=1}^{\infty}(j+1)e^{tb_{1+j}+t(c_{2}+...+c_{1+j})-jP(tf)}}{ e^{tb_{1}}+\sum_{j=1}^{\infty}e^{tb_{1+j}+t(c_{2}+...+c_{1+j})-jP(tf)}}. \]
\end{proposition}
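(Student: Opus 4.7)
The plan is to exploit two ingredients: Walters's explicit formulas, quoted in the excerpt, showing that $h_t$ is constant on each cylinder $[0^q1]$ and $[1^q0]$ with values $\alpha_{q,t}$ and $\beta_{q,t}$; together with companion explicit formulas for $\nu_t$ on the same cylinders, which I would derive directly from the dual eigenvalue equation $L_{tf}^{\ast}\nu_t=\lambda_t\nu_t$. Combined with a shift-invariance decomposition of $[0]$, these yield closed-form expressions for $\mu_t([0])$ and $\mu_t([10])$ whose ratio is exactly $S_0(t)$; a symmetric argument handles $S_1(t)$, and one last use of shift-invariance closes the proof.

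First I would compute $\nu_t$ on the relevant cylinders. Applying $L_{tf}^{\ast}\nu_t=\lambda_t\nu_t$ to $w=\chi_{[0^m1]}$ for $m\geq 2$ gives $\lambda_t\nu_t([0^m1])=e^{ta_m}\nu_t([0^{m-1}1])$, since the only $y$ with $\sigma y\in[0^m1]$ is of the form $y=0y'$ with $y'\in[0^{m-1}1]$, where $f\equiv a_m$. Iterating,
\[
\nu_t([0^m 1]) = \lambda_t^{-(m-1)}\,e^{t(a_2+\ldots+a_m)}\,\nu_t([01]),
\]
using the empty-sum convention $a_2+\ldots+a_1=0$ for $m=1$. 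Applying the equation once more with $w=\chi_{[10^m1]}$ (preimage in $[0^m1]$, with $f\equiv d_m$) yields
\[
\nu_t([10^m 1]) = \lambda_t^{-m}\,e^{td_m + t(a_2+\ldots+a_m)}\,\nu_t([01]).
\]
On the measure side, $\sigma^{-1}[0^k1]=[0^{k+1}1]\sqcup[10^k1]$, so shift-invariance gives $\mu_t([0^k1])=\mu_t([0^{k+1}1])+\mu_t([10^k1])$; iterating and using $\mu_t([0^N1])\to 0$ yields $\mu_t([0^k1])=\sum_{j\geq 0}\mu_t([10^{k+j}1])$, hence
\[
\mu_t([0])=\sum_{k\geq 1}\mu_t([0^k1])=\sum_{m\geq 1}m\,\mu_t([10^m1]).
\]

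To finish, note that $h_t\equiv\beta_{1,t}$ on $[10]\supset[10^m1]$, so $\mu_t([10^m1])=\beta_{1,t}\nu_t([10^m1])$. Substituting the closed form above and relabelling $m=1+j$, both $\mu_t([0])$ and $\mu_t([10])=\beta_{1,t}\nu_t([10])=\beta_{1,t}\sum_m\nu_t([10^m1])$ acquire the common prefactor $\beta_{1,t}\nu_t([01])/\lambda_t$, and their ratio reads off as $S_0(t)$. The symmetric argument (swapping $0\leftrightarrow 1$, $a\leftrightarrow c$, $b\leftrightarrow d$) gives $S_1(t)=\mu_t([1])/\mu_t([01])$. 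Finally, shift-invariance forces $\mu_t([01])=\mu_t([10])$ (both equal $\mu_t([1])-\mu_t([11])$); writing $\tau$ for this common value, the identity $\mu_t([0])+\mu_t([1])=\tau(S_0(t)+S_1(t))=1$ gives $\tau=(S_0(t)+S_1(t))^{-1}$, and the proposition follows.

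The one technical point that needs genuine care is the non-atomicity $\mu_t(\{0^\infty\})=0$ (and the decay $\mu_t([0^N1])\to 0$) underlying the decomposition of $[0]$: this is where Walters's strict inequality $P(tf)>ta$ enters, via the dual equation applied to the singleton yielding $(\lambda_t-e^{ta})\nu_t(\{0^\infty\})=0$. Everything else is bookkeeping, the only subtlety being the $m=1$ (equivalently $j=0$) boundary case, which requires the empty-sum convention announced in the excerpt so that all sums can be written uniformly starting at $j=0$.
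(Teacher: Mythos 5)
Your proof is correct, and it reaches the formulas by a genuinely different (and arguably cleaner) route than the paper. The paper decomposes $[0]=\bigsqcup_{j\ge 1}[0^j1]$, uses the Jacobian relation for $\mu_t$ to get $\mu_t([0^j1])=\mu_t([01])e^{-(j-1)P(tf)+t(a_2+\cdots+a_j)+\log\alpha_{j,t}-\log\alpha_{1,t}}$, and then needs a separate algebraic lemma --- substituting Walters's explicit series for $\alpha_{j,t}$ and performing a Fubini rearrangement of a double sum --- to identify $\sum_j \mu_t([0^j1])/\mu_t([01])$ with the stated $S_0(t)$, whose $(j+1)$ weights emerge from that rearrangement. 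You instead push the decomposition one level deeper, writing $\mu_t([0^k1])=\sum_{j\ge 0}\mu_t([10^{k+j}1])$ and hence $\mu_t([0])=\sum_m m\,\mu_t([10^m1])$, so the multiplicities $(j+1)$ appear combinatorially as $\#\{(k,j):k+j=m\}$; the dual eigenvalue equation then gives $\nu_t([10^m1])$ in closed form, and $S_0(t)$ is read off directly. The payoff is that you never need the explicit series formula for the eigenfunction, only its local constancy on $[10]$ and $[01]$ (your recursion for $\nu_t$ is, of course, just the paper's Jacobian relation transported to the eigenmeasure). The closing steps --- $\mu_t([01])=\mu_t([10])$ by shift-invariance and summing to $1$ --- coincide with the paper's, and you correctly flag the one point the paper glosses over, namely non-atomicity at $0^\infty$ and $1^\infty$ via $P(tf)>\max\{ta,tc\}$, which is needed to justify both cylinder decompositions.
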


\textbf{Remark:} We know that $e^{td_{1}}+\sum_{j=1}^{\infty}e^{td_{1+j} +t(a_{2}+...+a_{1+j})-jP(tf)}<\infty$. We are going to show bellow that $\mu_{t}([0])=\mu_{t}([01])S_{0}$. So we have $S_{0}<\infty$. The same argument can be used for $S_{1}$.

First, we need some lemmas:

\begin{lemma} Under the above hypothesis
\[S_{0}(t)= 1+ \sum_{j=2}^{\infty} e^{-(j-1)P(tf) + t(a_{2}+...+a_{j}) + \log(\alpha_{j,t}) - \log(\alpha_{1,t})},\]
\[S_{1}(t)= 1+ \sum_{j=2}^{\infty} e^{-(j-1)P(tf) + t(c_{2}+...+c_{j}) + \log(\beta_{j,t}) - \log(\beta_{1,t})}.\]
\end{lemma}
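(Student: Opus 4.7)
The plan is to prove the formula for $S_0(t)$; the formula for $S_1(t)$ follows by an entirely analogous calculation after swapping $(a_n, d_n, \alpha_{q,t})$ with $(c_n, b_n, \beta_{q,t})$, and the constant factor $\beta_{\infty,t}$ appearing in every $\beta_{q,t}$ will cancel in the ratio $\beta_{l,t}/\beta_{1,t}$, so it never shows up in the final expression. The first move is to recognise both the numerator and denominator of $S_0(t)$ in terms of the $\alpha_{q,t}$: setting
\[K_t := \frac{e^{P(tf)} - e^{ta}}{e^{td} e^{P(tf)}},\]
Walters's formula reads $\alpha_{q,t}/K_t = \sum_{i \geq 0} e^{t d_{q+i} + t(a_{q+1} + \cdots + a_{q+i}) - i P(tf)}$, so the denominator of $S_0(t)$ is exactly $\alpha_{1,t}/K_t$.

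Next I would split $(j+1) = 1 + j$ in the numerator, isolating the denominator: with $x_j := e^{t d_{1+j} + t(a_2 + \cdots + a_{1+j}) - j P(tf)}$,
\[N_0(t) - D_0(t) = \sum_{j=1}^{\infty} j\, x_j.\]
Writing $j = \#\{k : 1 \leq k \leq j\}$ and interchanging the order of summation (Tonelli is legitimate since all terms are positive and $D_0(t) < \infty$, as noted in the remark preceding the lemma), I obtain $\sum_{j \geq 1} j x_j = \sum_{k \geq 1} \sum_{j \geq k} x_j$. The substitution $i = j - k$ in the inner sum pulls out the factor $e^{-k P(tf) + t(a_2 + \cdots + a_{1+k})}$, and what remains is precisely $\alpha_{1+k,t}/K_t$:
\[\sum_{j \geq k} x_j = e^{-k P(tf) + t(a_2 + \cdots + a_{1+k})} \cdot \frac{\alpha_{1+k,t}}{K_t}.\]

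Reindexing $l = k+1$ and collecting,
\[N_0(t) - D_0(t) = \frac{1}{K_t}\sum_{l=2}^{\infty} e^{-(l-1) P(tf) + t(a_2 + \cdots + a_l)} \,\alpha_{l,t}.\]
Dividing by $D_0(t) = \alpha_{1,t}/K_t$ cancels the constant $K_t$ and produces
\[S_0(t) - 1 = \sum_{l=2}^{\infty} e^{-(l-1) P(tf) + t(a_2 + \cdots + a_l) + \log \alpha_{l,t} - \log \alpha_{1,t}},\]
which is the claimed identity. The only non-routine point is the Fubini/Tonelli interchange, but that is forced by the positivity of all terms together with the finiteness of $D_0(t)$; beyond that, the argument is pure bookkeeping, so I expect no real obstacle.
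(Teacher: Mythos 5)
Your proof is correct and is essentially the paper's own argument run in the opposite direction: the paper starts from the right-hand side, substitutes Walters's formula for $\alpha_{j,t}$ (with the same cancellation of the constant prefactor), and collapses the resulting double sum by counting that the term of index $m$ appears $m-1$ times, which is exactly your identity $\sum_{j\ge 1} j\,x_j=\sum_{k\ge1}\sum_{j\ge k}x_j$ read backwards. Your explicit appeal to Tonelli and to the finiteness of the denominator is a small tidiness bonus over the paper, but the decomposition and the key observation are the same.
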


\begin{proof}
We are going to show the equality for $S_{0}$ (the case for $S_{1}$ is similar). We have
\begin{align*}
1+ & \sum_{j=2}^{\infty} e^{-(j-1)P(tf) + t(a_{2}+...+a_{j}) + \log(\alpha_{j,t}) - \log(\alpha_{1,t})}\\
&= 1+ \frac{\sum_{j=2}^{\infty} e^{-(j-1)P(tf) + t(a_{2}+...+a_{j}) + \log(e^{td_{j}}+\sum_{i=1}^{\infty}e^{td_{j+i} +t(a_{j+1}+...+a_{j+i}) -iP(tf)})}}{ e^{td_{1}}+\sum_{i=1}^{\infty}e^{td_{1+i} +t(a_{2}+...+a_{1+i}) -iP(tf)}}\\
&= 1+ \frac{\sum_{j=2}^{\infty} \left(e^{-(j-1)P(tf) + t(a_{2}+...+a_{j})}\left( e^{td_{j}}+\sum_{i=1}^{\infty}e^{td_{j+i} +t(a_{j+1}+...+a_{j+i}) -iP(tf)}\right)\right)}{ e^{td_{1}}+\sum_{i=1}^{\infty}e^{td_{1+i} +t(a_{2}+...+a_{1+i}) -iP(tf)}}\\
&=1+ \frac{\sum_{j=2}^{\infty} \left( e^{td_{j}+t(a_{2}+...+a_{j})-(j-1)P(tf)}+\sum_{i=1}^{\infty}e^{td_{j+i} +t(a_{2}+...+a_{j+i}) -(j+i-1)P(tf)}\right)}{ e^{td_{1}}+\sum_{i=1}^{\infty}e^{td_{1+i} +t(a_{2}+...+a_{1+i}) -iP(tf)}}\\
&=1+ \frac{\sum_{j=2}^{\infty} \sum_{i=0}^{\infty}e^{td_{j+i} +t(a_{2}+...+a_{j+i}) -(j+i-1)P(tf)}}{ e^{td_{1}}+\sum_{i=1}^{\infty}e^{td_{1+i} +t(a_{2}+...+a_{1+i}) -iP(tf)}}\\
&=1+ \frac{\sum_{j=2}^{\infty}(j-1)e^{td_{j} +t(a_{2}+...+a_{j}) -(j-1)P(tf)}}{ e^{td_{1}}+\sum_{i=1}^{\infty}e^{td_{1+i} +t(a_{2}+...+a_{1+i}) -iP(tf)}}\\
&= \frac{e^{td_{1}}+\sum_{i=1}^{\infty}\left(e^{td_{1+i} +t(a_{2}+...+a_{1+i}) -iP(tf)}\right)+\sum_{j=2}^{\infty}(j-1)e^{td_{j} +t(a_{2}+...+a_{j}) -(j-1)P(tf)}}{ e^{td_{1}}+\sum_{i=1}^{\infty}e^{td_{1+i} +t(a_{2}+...+a_{1+i}) -iP(tf)}}\\
&=\frac{ e^{td_{1}} + \sum_{j=1}^{\infty}(j+1)e^{td_{1+j} +t(a_{2}+...+a_{1+j})-jP(tf)}}{e^{td_{1}}+\sum_{j=1}^{\infty}e^{td_{1+j} +t(a_{2}+...+a_{1+j})-jP(tf)}}.
\end{align*}

This shows the claim.

\end{proof}

\begin{lemma}
For any cylinder $K$
\begin{equation}
\mu_{t}(\sigma(K)) = \int_{K} e^{-tf - \log(h_{t}) + \log(h_{t}\circ\sigma) + P(tf)} \, d\mu_{t}.  \label{rel}
\end{equation}
\end{lemma}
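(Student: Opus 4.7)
The plan is to derive the identity from the eigenmeasure relation $\int L_{tf}(g)\,d\nu_{t}=\lambda_{t}\int g\,d\nu_{t}$ (which expresses $L_{tf}^{*}\nu_{t}=\lambda_{t}\nu_{t}$) applied to a judicious test function, and then to rewrite the resulting equation in terms of $\mu_{t}=h_{t}\nu_{t}$.

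By $\sigma$-additivity (together with a standard monotone-class extension) it suffices to verify the identity on a basic cylinder $K=[a_{1}\ldots a_{n}]$. Such a $K$ is clopen in $\{0,1\}^{\mathbb{N}}$, and the shift restricts to a homeomorphism $\sigma:K\to\sigma(K)=[a_{2}\ldots a_{n}]$ (with $\sigma(K)=X$ when $n=1$) whose inverse is $x\mapsto a_{1}x$. The key computation is to set
\[g(y):=1_{K}(y)\, h_{t}(\sigma(y))\, e^{-tf(y)},\]
which is continuous and bounded, and to observe that in the sum defining $L_{tf}(g)(x)$ only the single preimage $y=a_{1}x$ lies in $K$, and only when $x\in\sigma(K)$; after cancellation of $e^{tf(y)}$ against $e^{-tf(y)}$ one obtains
\[L_{tf}(g)(x)=1_{\sigma(K)}(x)\,h_{t}(x).\]

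Integrating against $\nu_{t}$ and invoking the eigenmeasure identity with $\lambda_{t}=e^{P(tf)}$ yields
\[\mu_{t}(\sigma(K))=\int 1_{\sigma(K)}\,h_{t}\,d\nu_{t}=\int L_{tf}(g)\,d\nu_{t}=\lambda_{t}\int 1_{K}\,(h_{t}\circ\sigma)\,e^{-tf}\,d\nu_{t}.\]
To convert the right-hand integral from $\nu_{t}$ to $\mu_{t}=h_{t}\nu_{t}$, I multiply and divide the integrand by $h_{t}$; combining $\lambda_{t}$, $(h_{t}\circ\sigma)/h_{t}$, and $e^{-tf}$ into a single exponential produces exactly the integrand $e^{-tf-\log h_{t}+\log(h_{t}\circ\sigma)+P(tf)}$ of the statement.

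No step presents a real obstacle; the only points worth noting are (i) the positivity and continuity of $h_{t}$ (needed so that $1/h_{t}$ is well defined and $\log h_{t}$ is continuous), which is granted since $h_{t}\in C(X)$ with $h_{t}>0$ by the Walters construction recalled at the start of Section 2, and (ii) that although the duality $\int L_{tf}(g)\,d\nu_{t}=\lambda_{t}\int g\,d\nu_{t}$ is stated for continuous $g$, the $g$ chosen above is itself continuous because $K$ is clopen, so the identity applies without any approximation argument.
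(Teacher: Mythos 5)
Your proof is correct, and it is genuinely more self-contained than what the paper does: the paper's entire ``proof'' of this lemma is a citation to page 37 of Parry--Pollicott, where the identity appears as the statement that $\mu_{t}$ is the $g$-measure (equivalently, has Jacobian $e^{tf+\log h_{t}-\log(h_{t}\circ\sigma)-P(tf)}$ with respect to $\sigma$) associated to the normalized potential. Your derivation reconstructs exactly that fact from first principles: the test function $g=1_{K}\,(h_{t}\circ\sigma)\,e^{-tf}$ is the right choice, the computation $L_{tf}(g)=1_{\sigma(K)}h_{t}$ is correct because exactly one of the two preimages of $x$ begins with the symbol $a_{1}$, and the passage from $\nu_{t}$ to $\mu_{t}=h_{t}\nu_{t}$ is legitimate since $h_{t}>0$ is continuous. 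One small caution: the reduction ``by $\sigma$-additivity it suffices to treat basic cylinders'' should be dropped or qualified, because the identity is actually \emph{false} for a finite union of basic cylinders whose images under $\sigma$ overlap (e.g.\ $K=[01]\cup[11]$ gives $\sigma(K)=[1]$ on the left but $2\mu_{t}([1])$ on the right); the statement implicitly means basic cylinders $[a_{1}\ldots a_{n}]$, on which $\sigma$ is injective, and that is the only case used in the paper, so your core argument covers everything that is needed.
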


\begin{proof}
See page 37 in \cite{PP}.

\end{proof}

Now, we can prove the claim of the above proposition:

\begin{proof}
Applying ($\ref{rel}$), for $n\geq 2$:
\begin{align*}
\mu_{t}([0^{n-1}1]) &= \mu_{t}(\sigma[0^{n}1]) = \int_{[0^{n}1]} e^{-tf - \log(h_{t}) + \log(h_{t}\circ\sigma) + P(tf)} \, d\mu_{t} \\
&= \mu_{t}([0^{n}1])e^{-ta_{n}-\log(\alpha_{n,t})+\log(\alpha_{n-1,t}) +P(tf)}.
\end{align*}

So,
\[\mu_{t}([0^{n}1]) = \mu_{t}([01])e^{-(n-1)P(tf)+t(a_{2}+...+a_{n}) +\log(\alpha_{n,t}) - \log(\alpha_{1,t})}.\]

Also, by the same argument
\[\mu_{t}([1^{n}0]) = \mu_{t}([10])e^{-(n-1)P(tf)+t(c_{2}+...+c_{n}) +\log(\beta_{n,t}) - \log(\beta_{1,t})}.\]

Moreover, as $\mu_{t}$ is non atomic, then
\[\mu_{t}([0]) = \sum_{j=1}^{\infty}\mu_{t}([0^{j}1]) = \mu_{t}([01])S_{0}(t).\]
\[\mu_{t}([1]) = \sum_{j=1}^{\infty}\mu_{t}([1^{j}0]) = \mu_{t}([10])S_{1}(t).\]
Using the fact that $\mu_{t}$ is $\sigma-$invariant:
\[\mu_{t}([01]) = \mu_{t}([0])-\mu_{t}([00]) = \mu_{t}([*0]) -\mu_{t}([00]) = \mu_{t}([10]).\]
So, finally
\[ 1 = \mu_{t}([0]) + \mu_{t}([1]) =  \mu_{t}([01])S_{0} + \mu_{t}([10])S_{1} = \mu_{t}([01])(S_{0}(t)+S_{1}(t)).\]
Then,
\[\mu_{t}([0]) = \frac{S_{0}(t)}{S_{0}(t)+S_{1}(t)}, \ \ and \ \  \mu_{t}([1]) = \frac{S_{1}(t)}{S_{0}(t)+S_{1}(t)}.\]
\end{proof}

\begin{remark} We will be interested, of course, in estimating the limit $S_{0}(t)/S_{1}(t)$.
\end{remark}

Now we will consider more general cylinder sets of size $3+n, \, n\geq 0$. The procedure will by induction: we suppose we are able to estimate $\mu_{t}(K)$ for cylinders  $K$ of size smaller than $ n+2$. Then, we have:

\begin{proposition}\label{propa}
For $j\geq 2$, $n\geq 0$
\[\mu_{t}([0^{j}1x_{1}...x_{n+2-j}]) = \mu_{t}([0^{j-1}1x_{1}...x_{n+2-j}])
e^{-P(tf)+ta_{j}+\log(\alpha_{j,t})-
\log(\alpha_{j-1,t})},\]
\begin{align*}
\mu_{t}([0^{n+3}]) &= \mu_{t}([0^{n+2}]) - \mu_{t}([0^{n+2}1])\\
&=\mu_{t}([0^{n+2}]) -\mu_{t}([0^{n+1}1])e^{-P(tf)+ta_{n+2}+\log(\alpha_{n+2,t})-\log(\alpha_{n+1,t})}.
\end{align*}
In the same way
\[\mu_{t}([1^{j}0x_{1}...x_{n+2-j}]) = \mu_{t}([1^{j-1}0x_{1}...x_{n+2-j}])
e^{-P(tf)+tc_{j}+\log(\beta_{j,t})-
\log(\beta_{j-1,t})},\]
\begin{align*}
\mu_{t}([1^{n+3}]) &= \mu_{t}([1^{n+2}]) - \mu_{t}([1^{n+2}0])\\
&= \mu_{t}([1^{n+2}]) - \mu_{t}([1^{n+1}0])e^{-P(tf)+tc_{n+2}+\log(\beta_{n+2,t})-\log(\beta_{n+1,t})}.
\end{align*}
\end{proposition}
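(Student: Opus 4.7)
The plan is to apply the relation (\ref{rel}) directly to each cylinder of the form $[0^j 1 x_1 \ldots x_{n+2-j}]$, and to exploit the fact that, for $j \geq 2$, the potential $f$, the eigenfunction $h_t$, and the composition $h_t \circ \sigma$ are all constant on such a cylinder. Once this is observed, the integrand in (\ref{rel}) is constant, and the desired multiplicative recursion falls out immediately.

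First, I fix $j \geq 2$ and $n \geq 0$ and set $K := [0^j 1 x_1 \ldots x_{n+2-j}]$. For any $y \in K$, the definition of the class $R(X)$ gives $f(y) = a_j$; the explicit form of $h_t$ recalled in Section 2 gives $h_t(y) = \alpha_{j,t}$; and since $\sigma(y)$ begins with $0^{j-1} 1$ (this is exactly where the hypothesis $j \geq 2$ is needed so that this prefix still ends in a $1$), we also have $h_t(\sigma(y)) = \alpha_{j-1,t}$. Consequently the integrand in (\ref{rel}) reduces to the constant $e^{-t a_j - \log \alpha_{j,t} + \log \alpha_{j-1,t} + P(tf)}$ on $K$. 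Noting that $\sigma$ restricted to $K$ is a bijection onto $\sigma(K) = [0^{j-1} 1 x_1 \ldots x_{n+2-j}]$, formula (\ref{rel}) then yields
\[
\mu_t([0^{j-1} 1 x_1 \ldots x_{n+2-j}]) \;=\; \mu_t(K) \cdot e^{-t a_j - \log \alpha_{j,t} + \log \alpha_{j-1,t} + P(tf)},
\]
and solving for $\mu_t(K)$ produces the first stated identity.

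To handle $[0^{n+3}]$, I would use the disjoint decomposition $[0^{n+2}] = [0^{n+3}] \sqcup [0^{n+2} 1]$, which gives $\mu_t([0^{n+3}]) = \mu_t([0^{n+2}]) - \mu_t([0^{n+2} 1])$, and then apply the first identity with $j = n+2$ and empty suffix to rewrite $\mu_t([0^{n+2} 1])$ in terms of $\mu_t([0^{n+1} 1])$. The formulas for cylinders of the form $[1^j 0 x_1 \ldots x_{n+2-j}]$ and for $[1^{n+3}]$ are obtained by the identical argument, using $f(1^p 0 z) = c_p$ and $h_t = \beta_{p,t}$ on $[1^p 0]$ in place of $a_p$ and $\alpha_{p,t}$.

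There is no serious obstacle here: the proof is essentially bookkeeping around the transfer-operator identity (\ref{rel}). The only delicate point is precisely the one that forces the restriction $j \geq 2$: if $j = 1$, then $\sigma(y)$ lies in a cylinder $[1 x_1 \ldots]$ on which $h_t$ is \emph{not} constant (its value depends on how many initial $1$'s occur in the further coordinates), so the integrand would fail to be constant on $K$ and the multiplicative conclusion, expressed purely through the $\alpha_{k,t}$'s, would no longer hold as stated.
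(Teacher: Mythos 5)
Your proof is correct and takes essentially the same route as the paper: the paper's own proof of this proposition just says the first identity ``is a consequence of $(\ref{rel})$'' and the second is obvious, and the computation you spell out (constancy of $f$, $h_t$ and $h_t\circ\sigma$ on the cylinder, so that the integrand in $(\ref{rel})$ is the constant $e^{-ta_j-\log\alpha_{j,t}+\log\alpha_{j-1,t}+P(tf)}$) is exactly the one the authors already carried out explicitly for the suffix-free cylinders in the proof of Proposition \ref{quo}. Your closing remark on why $j\geq 2$ is needed is a correct and useful clarification, but nothing in your argument departs from the paper's.
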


\begin{proof}
 The first equality is a consequence of $(\ref{rel})$ and the second is obvious. The others expressions follow in a similar way.
\end{proof}

\begin{proposition}\label{propb}
For $j\geq 1$:
\begin{align*}
\mu_{t}([01^{j}0x_{1}...x_{n-j+1}]) &= \mu_{t}([1^{j}0x_{1}...x_{n-j+1}]) - \mu_{t}([1^{j+1}0x_{1}...x_{n-j+1}])\\
&= \mu_{t}([1^{j}0x_{1}...x_{n-j+1}])(1-e^{-P(tf) +tc_{j+1} + \log(\beta_{j+1,t})-\log(\beta_{j,t})      }     ).
\end{align*}
In the same way,
for $j\geq 1$:
\begin{align*}
\mu_{t}([10^{j}1x_{1}...x_{n-j+1}]) &= \mu_{t}([0^{j}1x_{1}...x_{n-j+1}]) - \mu_{t}([0^{j+1}1x_{1}...x_{n-j+1}]) \\
&= \mu_{t}([0^{j}1x_{1}...x_{n-j+1}])(1-e^{-P(tf) +ta_{j+1} + \log(\alpha_{j+1,t})-\log(\alpha_{j,t})      }     ).
\end{align*}

\end{proposition}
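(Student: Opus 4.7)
The plan is to derive Proposition \ref{propb} from two ingredients that are already in hand: the $\sigma$-invariance of $\mu_{t}$ (which allows us to split a cylinder according to the symbol prepended to it), and the multiplicative recursion given by Proposition \ref{propa}. I will work out the identity for $[01^{j}0x_{1}\ldots x_{n-j+1}]$ in detail; the companion identity for $[10^{j}1x_{1}\ldots x_{n-j+1}]$ is obtained mechanically by swapping the symbols $0$ and $1$ and correspondingly swapping the pairs $(a_{n},\alpha_{n,t}) \leftrightarrow (c_{n},\beta_{n,t})$.

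\textbf{Step 1: decompose by $\sigma$-invariance.} Because $\sigma$ preserves $\mu_{t}$ and each point of $X$ has exactly two preimages under $\sigma$, one gets
\[
\mu_{t}([1^{j}0x_{1}\ldots x_{n-j+1}]) \,=\, \mu_{t}\bigl(\sigma^{-1}[1^{j}0x_{1}\ldots x_{n-j+1}]\bigr)\,=\,\mu_{t}([01^{j}0x_{1}\ldots x_{n-j+1}])\,+\,\mu_{t}([1^{j+1}0x_{1}\ldots x_{n-j+1}]),
\]
the last union being trivially disjoint. Rearranging gives the first equality in the statement.

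\textbf{Step 2: apply the recursion from Proposition \ref{propa}.} Since $j\geq 1$, we have $j+1\geq 2$, so the second block of Proposition \ref{propa} (the $1$-run version) applies with index $j+1$ and yields
\[
\mu_{t}([1^{j+1}0x_{1}\ldots x_{n-j+1}])\,=\,\mu_{t}([1^{j}0x_{1}\ldots x_{n-j+1}])\cdot e^{-P(tf)+tc_{j+1}+\log(\beta_{j+1,t})-\log(\beta_{j,t})}.
\]
Substituting this into the identity from Step 1 and factoring $\mu_{t}([1^{j}0\ldots])$ produces the second equality $\mu_{t}([01^{j}0x_{1}\ldots x_{n-j+1}])=\mu_{t}([1^{j}0x_{1}\ldots x_{n-j+1}])\bigl(1-e^{-P(tf)+tc_{j+1}+\log(\beta_{j+1,t})-\log(\beta_{j,t})}\bigr)$.

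\textbf{Obstacles.} There is essentially none: the argument is mechanical once $\sigma$-invariance and Proposition \ref{propa} are granted. The only two points that deserve a line of checking are that (i) $\sigma^{-1}$ of a cylinder really is the union of the two length-one extensions, so no boundary/atom issue arises (and $\mu_{t}$ is non-atomic, as already used in the proof of Proposition \ref{quo}), and (ii) the index shift is legal, i.e. the hypothesis $j\geq 1$ of Proposition \ref{propb} matches the condition $j+1\geq 2$ required to invoke Proposition \ref{propa}. Both are immediate. The symmetric identity for $[10^{j}1x_{1}\ldots x_{n-j+1}]$ follows verbatim, using the first block of Proposition \ref{propa} in place of the second.
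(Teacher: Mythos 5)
Your proof is correct and matches the paper's (very terse) argument: the paper's "it follows in a similar way as last proposition" amounts to exactly your two steps, namely the disjoint decomposition $\sigma^{-1}([1^{j}0x_{1}\ldots x_{n-j+1}])=[01^{j}0x_{1}\ldots x_{n-j+1}]\sqcup[1^{j+1}0x_{1}\ldots x_{n-j+1}]$ combined with $\sigma$-invariance, followed by the recursion of Proposition \ref{propa} (itself a consequence of (\ref{rel})) applied at index $j+1$. Your index check $j+1\geq 2$ and the matching of the suffix length $n+2-(j+1)=n-j+1$ are both right, so nothing is missing.
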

\begin{proof} It follows in a similar way as last proposition.
\end{proof}

\subsection{Analysis of the limit  $\lim_{t\to\infty} \frac{1}{t}\log(P(tf))$}

On this section we are going to consider a special class that will be called \textbf{Non-positive Potentials}. These  are the summable Walters's potentials which satisfies: $a_{n}<0$, $c_{n}<0$, $b_{n}\equiv b <0$ , $d_{n}\equiv d < 0$, and $a=c=0$.
\bigskip

We remember that $\epsilon_{t} := P(tf) -t\beta(f)$. For Non-positive Potentials  we have $P(tf)=\epsilon_{t} \to 0$ ($0$ is the supreme of the entropy of maximizing measures \cite{CoG}).

\begin{proposition}
For Non-positive Potentials there exists
\[\lim_{t\to\infty}\frac{1}{t}\log(\epsilon_{t}).\]
Moreover, it converges to
\[\left\{ \begin{array}{ll}
b+d +\sum_{j=1}^{\infty}c_{1+j}, & \text{when} \ \ \sum_{j=1}^{\infty}a_{1+j}\leq b+d +\sum_{j=1}^{\infty}c_{1+j},\\
b+d +\sum_{j=1}^{\infty}a_{1+j}, & \text{when} \ \ \sum_{j=1}^{\infty}c_{1+j} \leq b+d +\sum_{j=1}^{\infty}a_{1+j},\\
\frac{b+d}{2} +\sum_{j=1}^{\infty}\frac{a_{1+j}}{2} +\sum_{j=1}^{\infty}\frac{c_{1+j}}{2}, & \text{in the other case}
\end{array} \right.\]
 We denote by $A$ any the corresponding limit
$\lim_{t\to\infty}\frac{1}{t}\log(\epsilon_{t}).$

\end{proposition}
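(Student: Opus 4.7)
The plan is to reduce everything to an application of Lemma \ref{epsilont}. First, I would verify that a Non-positive Potential satisfies its hypotheses: by construction $a=c=0=\beta(f)$, and since $f$ attains the value $0$ only at the fixed points $0^{\infty}$ and $1^{\infty}$ (all other orbits land in cylinders of the form $0^{p}1z$, $1^{p}0z$, $01^{q}0z$, or $10^{q}1z$, on which $f$ takes one of the strictly negative values $a_{p}, c_{p}, b, d$), every invariant probability with $\int f\,d\mu=0$ must be supported on $\{0^{\infty},1^{\infty}\}$, giving the required form $s\delta_{0^{\infty}}+(1-s)\delta_{1^{\infty}}$. Thus Lemma \ref{epsilont} applies, yielding the existence of $A:=\lim_{t\to\infty}\frac{1}{t}\log\epsilon_{t}$ and narrowing it to the four candidates $\{0,A_{1},A_{2},A_{3}\}$.

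Next I would specialize $A_{1},A_{2},A_{3}$ to the non-positive setting. Since $d_{1+j}\equiv d$ and each $a_{1+j}$ with $j\geq 1$ is strictly negative, while the convention $a_{2}+\cdots+a_{1+j}=0$ holds at $j=0$, the supremum $\sup_{j\geq 0}(d_{1+j}+a_{2}+\cdots+a_{1+j})$ is attained at $j=0$ and equals $d$; the analogous supremum involving $b$'s and $c$'s equals $b$. Inserting these and $\beta(f)=0$ into the formulas of Lemma \ref{epsilont} gives $A_{1}=\tfrac{b+d}{2}+\tfrac{1}{2}\sum_{j\geq 1}(a_{1+j}+c_{1+j})$, $A_{2}=b+d+\sum_{j\geq 1}c_{1+j}$, $A_{3}=b+d+\sum_{j\geq 1}a_{1+j}$. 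The option $A=0$ is ruled out: it corresponds to the ``First case'' in the proof of Lemma \ref{epsilont}, which demands $2\beta(f)$ to equal the sum of the two simplified suprema, i.e.\ $0=b+d$, impossible since $b,d<0$.

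To finish, I would use the uniqueness part of Lemma \ref{epsilont} to match each hypothesis of the proposition with the correct candidate. Writing $D:=d+\sum_{j\geq 1}a_{1+j}$ and $B:=b+\sum_{j\geq 1}c_{1+j}$, equation $(\ref{eqA})$ reads $\max\{d,D-y\}+\max\{b,B-y\}=0$. Under the first hypothesis $\sum a_{1+j}\leq b+d+\sum c_{1+j}$, substituting $y=A_{2}$ yields $D-A_{2}\leq d$ (equivalent to the hypothesis) and $B-A_{2}=-d>b$, so the two maxima are $d$ and $B-A_{2}$, summing to $0$; the symmetric hypothesis gives $A=A_{3}$ by the same check. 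In the remaining case both $\sum a_{1+j}>b+d+\sum c_{1+j}$ and $\sum c_{1+j}>b+d+\sum a_{1+j}$ hold strictly; substituting $y=A_{1}$ then gives $D-A_{1}\geq d$ and $B-A_{1}\geq b$, and the equation collapses to $D+B-2A_{1}=0$, identifying $A$ with $A_{1}$. The main obstacle is confirming that the case analysis is exhaustive and consistent on the boundaries; this is handled by observing that $b+d<0$ prevents the first two hypotheses from holding simultaneously, and on each boundary (equality in one hypothesis) the relevant two of $A_{1},A_{2},A_{3}$ coincide, so the limit is unambiguously defined.
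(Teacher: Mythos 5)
Your proof is correct and follows essentially the same route as the paper: reduce to Lemma \ref{epsilont}, observe that the suprema collapse to $d$ and $b$ so the candidates become $0, A_1, A_2, A_3$ with the stated simplified forms, and then identify the right candidate by a three-case analysis of equation $(\ref{eqA})$. The only differences are cosmetic and favorable to you: you explicitly check the hypothesis of Lemma \ref{epsilont} on maximizing measures (which the paper leaves implicit), and you verify directly that the claimed candidate solves $\max\{d,D-y\}+\max\{b,B-y\}=0$ and invoke uniqueness, whereas the paper argues by eliminating the other candidates.
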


\textbf{Remark:}
Note that the cases $\sum_{j=1}^{\infty}a_{1+j} \leq b+d +\sum_{j=1}^{\infty}c_{1+j}$ and \newline
$\sum_{j=1}^{\infty}c_{1+j} \leq b+d +\sum_{j=1}^{\infty}a_{1+j}$ can not occur simultaneously, because $2(b+d) < 0$.

\begin{proof}
We apply the claim of Lemma $\ref{epsilont}$ and conclude that for Non-positive Potentials the limit $A:=\lim_{t\to\infty}\frac{1}{t}\log(\epsilon_{t})$ is the unique real number smaller than zero satisfying
\[\max\{d, d+\sum_{j=1}^{\infty}a_{1+j} - A\} + \max\{b, b+\sum_{j=1}^{\infty}c_{1+j} - A\} =  0. \ \ (*)\]

 Moreover, by Lemma $\ref{epsilont}$ (note that $A\neq 0$, as one can see from analysis of the first case of lemma \ref{epsilont}) there are three possibilities:
\[ A=A_{1}= \frac{b+d}{2} +\sum_{j=1}^{\infty}\frac{a_{1+j}}{2} +\sum_{j=1}^{\infty}\frac{c_{1+j}}{2}\]
or,
\[A = A_{2} = b+d +\sum_{j=1}^{\infty}c_{1+j}, \]
 or,
\[ A =A_{3}= b+d +\sum_{j=1}^{\infty}a_{1+j}.\]

\textbf{First case:}
\[\sum_{j=1}^{\infty}a_{1+j} \leq b+d +\sum_{j=1}^{\infty}c_{1+j}.\]
Then,
\[\sum_{j=1}^{\infty}c_{1+j} > b+d +\sum_{j=1}^{\infty}a_{1+j}.\]
We first show that, in this case, $A\neq A_{3}$, because $A_{3}$ don't satisfies (*). Indeed,
\begin{align*}
&\max\{d, d+\sum_{j=1}^{\infty}a_{1+j} - A_{3}\} + \max\{b, b+\sum_{j=1}^{\infty}c_{1+j} - A_{3}\}\\
& =\max\{d, d+\sum_{j=1}^{\infty}a_{1+j} -b -d -\sum_{j=1}^{\infty}a_{1+j} \} + \max\{b, b+\sum_{j=1}^{\infty}c_{1+j} - b-d -\sum_{j=1}^{\infty}a_{1+j}\}\\
& = (-b )+ ( b+\sum_{j=1}^{\infty}c_{1+j} - b-d -\sum_{j=1}^{\infty}a_{1+j})  \\
&= \sum_{j=1}^{\infty}c_{1+j} - b-d -\sum_{j=1}^{\infty}a_{1+j} >0.
\end{align*}
Now, we analyze $A_{1}$. Note that
\begin{align*}
&\max\{d, d+\sum_{j=1}^{\infty}a_{1+j} - A_{1}\} + \max\{b, b+\sum_{j=1}^{\infty}c_{1+j} - A_{1}\}\\
& =\max\{d, d+\sum_{j=1}^{\infty}a_{1+j} -\frac{b+d}{2} -\sum_{j=1}^{\infty}\frac{a_{1+j}}{2} -\sum_{j=1}^{\infty}\frac{c_{1+j}}{2} \}\\
& \, + \max\{b, b+\sum_{j=1}^{\infty}c_{1+j} - \frac{b+d}{2} -\sum_{j=1}^{\infty}\frac{a_{1+j}}{2} -\sum_{j=1}^{\infty}\frac{c_{1+j}}{2}\}\\
&= (d) + (b+\sum_{j=1}^{\infty}c_{1+j} - \frac{b+d}{2} -\sum_{j=1}^{\infty}\frac{a_{1+j}}{2} -\sum_{j=1}^{\infty}\frac{c_{1+j}}{2})\\
&= \frac{b+d}{2} + \sum_{j=1}^{\infty}\frac{c_{1+j}}{2} - \sum_{j=1}^{\infty}\frac{a_{1+j}}{2} \geq 0.
\end{align*}
If the equality is false, we get that $A=A_{2}$. If the equality is true, then
 \[A_{1}= \frac{b+d}{2} +\sum_{j=1}^{\infty}\frac{a_{1+j}}{2} +\sum_{j=1}^{\infty}\frac{c_{1+j}}{2} = b+d+\sum_{j=1}^{\infty}c_{1+j} = A_{2}.\]

\textbf{Second case:}
\[\sum_{j=1}^{\infty}c_{1+j} \leq b+d +\sum_{j=1}^{\infty}a_{1+j}.\]

The claim of this case follows from a similar argument as above.
\newline

\textbf{Third case:}
\[\sum_{j=1}^{\infty}c_{1+j} > b+d +\sum_{j=1}^{\infty}a_{1+j} \ \ and \ \ \sum_{j=1}^{\infty}a_{1+j} > b+d +\sum_{j=1}^{\infty}c_{1+j}.\]
We are going to show that $A_{2}$ does not satisfies (*). The proof for $A_{3}$ is analogous.

Note that
\begin{align*}
&\max\{d, d+\sum_{j=1}^{\infty}a_{1+j} - A_{2}\} + \max\{b, b+\sum_{j=1}^{\infty}c_{1+j} - A_{2}\}\\
&= \max\{d, d+\sum_{j=1}^{\infty}a_{1+j} - b-d -\sum_{j=1}^{\infty}c_{1+j}\} + \max\{b, b+\sum_{j=1}^{\infty}c_{1+j} - b-d -\sum_{j=1}^{\infty}c_{1+j}\}\\
&= (d+\sum_{j=1}^{\infty}a_{1+j} - b-d -\sum_{j=1}^{\infty}c_{1+j}) + (-d)\\
&= \sum_{j=1}^{\infty}a_{1+j} - b-d -\sum_{j=1}^{\infty}c_{1+j} > 0.
\end{align*}

The claim follows from this.
\end{proof}

\subsection{ The proof of Theorem $\ref{teo2}$}

Now, we analyze the selection of a measure by the family $\mu_t$, when $t\to\infty$, for a class of Non-positive potentials. We know that for any Walters potential:
\[\mu_{t}([0]) = \frac{S_{0}(t)}{S_{0}(t)+S_{1}(t)} \ \ \ \ and \ \ \ \ \mu_{t}([1]) = \frac{S_{1}(t)}{S_{0}(t)+S_{1}(t)},\]
where
\[S_{0}(t) = \frac{ e^{td_{1}} + \sum_{j=1}^{\infty}(j+1)e^{td_{1+j} +t(a_{2}+...+a_{1+j})-jP(tf)}}{e^{td_{1}}+\sum_{j=1}^{\infty}e^{td_{1+j} +t(a_{2}+...+a_{1+j})-jP(tf)}},\]
\[S_{1}(t) = \frac{ e^{tb_{1}}+\sum_{j=1}^{\infty}(j+1)e^{tb_{1+j}+t(c_{2}+...+c_{1+j})-jP(tf)}}{ e^{tb_{1}}+\sum_{j=1}^{\infty}e^{tb_{1+j}+t(c_{2}+...+c_{1+j})-jP(tf)}}. \]

For Non-positive Potentials we have
\[S_{0}(t) = \frac{ e^{td} + \sum_{j=1}^{\infty}(j+1)e^{td +t(a_{2}+...+a_{1+j})-j\epsilon_{t}}}{e^{td}+\sum_{j=1}^{\infty}e^{td +t(a_{2}+...+a_{1+j})-j\epsilon_{t}}},\]
\[=\frac{ 1 + \sum_{j=1}^{\infty}(j+1)e^{t(a_{2}+...+a_{1+j})-j\epsilon_{t}}}{1+\sum_{j=1}^{\infty}e^{t(a_{2}+...+a_{1+j})-j\epsilon_{t}}},\]
\[S_{1}(t) = \frac{ 1+\sum_{j=1}^{\infty}(j+1)e^{t(c_{2}+...+c_{1+j})-j\epsilon_{t}}}{ 1+\sum_{j=1}^{\infty}e^{t(c_{2}+...+c_{1+j})-j\epsilon_{t}}}. \]

We have to analyze what happens with the limit $\frac{S_{0}(t)}{S_{1}(t)}$, $t \to \infty$,  in order to prove  Theorem $\ref{teo2}$ .

\begin{lemma}
For $z<0$, we have, for any $j_1>0$
\[\sum_{j=0}^{\infty} (j+1) e^{jz} = \frac{1}{(1-e^{z})^{2}}.\]
\[ \sum_{j=j_{1}}^{\infty} (j+1) e^{jz} = e^{j_{1}z}\left(\frac{j_{1}}{1-e^{z}}+\frac{1}{(1-e^{z})^{2}}\right) .\]
\end{lemma}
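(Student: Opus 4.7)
The plan is to treat this as a standard geometric series identity, with the substitution $r := e^{z}$. Since $z < 0$, we have $0 < r < 1$, so all the series below are absolutely convergent and the usual manipulations are justified.

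For the first identity, I would start from the elementary geometric series $\sum_{j=0}^{\infty} r^{j} = \frac{1}{1-r}$, differentiate term-by-term in $r$ on $(0,1)$ to obtain $\sum_{j=0}^{\infty} (j+1) r^{j} = \frac{1}{(1-r)^{2}}$, and then substitute $r = e^{z}$. Alternatively, one can rewrite $(j+1)r^{j}$ as $\frac{d}{dr}r^{j+1}$ and swap sum and derivative; either justification is routine.

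For the second identity, I would reduce to the first one by reindexing. Setting $k = j - j_{1}$ and splitting $j+1 = (k+1) + j_{1}$,
\[
\sum_{j=j_{1}}^{\infty} (j+1) e^{jz} = e^{j_{1}z}\sum_{k=0}^{\infty} \bigl((k+1) + j_{1}\bigr) e^{kz} = e^{j_{1}z}\left(\sum_{k=0}^{\infty}(k+1)e^{kz} + j_{1}\sum_{k=0}^{\infty} e^{kz}\right),
\]
and then apply the first identity together with $\sum_{k=0}^{\infty} e^{kz} = \frac{1}{1-e^{z}}$ to get exactly $e^{j_{1}z}\bigl(\tfrac{j_{1}}{1-e^{z}} + \tfrac{1}{(1-e^{z})^{2}}\bigr)$.

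There is no real obstacle here: the only thing one needs to be slightly careful about is interchanging differentiation and the infinite sum in the first step, but this is guaranteed on any compact subinterval of $(0,1)$ since the differentiated series converges uniformly there. So the proof will be a short two-line computation for each identity.
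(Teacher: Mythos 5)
Your proposal is correct. For the second identity you do exactly what the paper does: reindex with $k=j-j_{1}$, split $j+1=(k+1)+j_{1}$, and apply the two geometric-series sums. For the first identity your route differs slightly from the paper's: you differentiate $\sum_{j\geq 0}r^{j}=\frac{1}{1-r}$ term by term and substitute $r=e^{z}$, whereas the paper writes $j+1=\#\{i:0\leq i\leq j\}$ and swaps the order of summation, computing
\[
\sum_{j=0}^{\infty}(j+1)e^{jz}=\sum_{i=0}^{\infty}\sum_{j=i}^{\infty}e^{jz}=\sum_{i=0}^{\infty}\frac{e^{iz}}{1-e^{z}}=\frac{1}{(1-e^{z})^{2}}.
\]
The paper's double-sum version needs no justification beyond Tonelli for nonnegative terms, while yours requires the (routine, and correctly flagged) uniform-convergence argument for interchanging derivative and sum on compact subintervals of $(0,1)$. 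Both are standard and both yield the same identity; there is no gap in your argument.
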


\begin{proof} Note that
\[\sum_{j=0}^{\infty} (j+1) e^{jz} = \sum_{i=0}^{\infty}\sum_{j=i}^{\infty}e^{jz} = \sum_{i=0}^{\infty}\frac{e^{iz}}{1-e^{z}}= \frac{1}{(1-e^{z})^{2}}.\]

Moreover,

\begin{align*}
\sum_{j=j_{1}}^{\infty} (j+1) e^{jz} &= \sum_{i=0}^{\infty} (i+j_{1}+1) e^{(i+j_{1})z} = j_{1}e^{j_{1}z}\sum_{i=0}^{\infty}e^{iz} + e^{j_{1}(z)}\sum_{i=0}^{\infty}(i+1)e^{iz} \\
&= e^{j_{1}z}\left(\frac{j_{1}}{1-e^{z}}+\frac{1}{(1-e^{z})^{2}}\right).
\end{align*}

\end{proof}

\begin{corollary}\label{cora} For any $j_1>0$
\[\lim_{t\to\infty}(\epsilon_{t})^2\sum_{j=j_{1}}^{\infty} (j+1) e^{-j\epsilon_{t}} = 1.\]
\[\lim_{t\to\infty}\epsilon_{t}
\sum_{j=j_{1}}^{\infty} e^{-j\epsilon_{t}} = 1.\]
\end{corollary}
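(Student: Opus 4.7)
The plan is to derive both limits as immediate consequences of the previous lemma, using the already-established fact that $\epsilon_t \to 0$ as $t \to \infty$ (proved in the proposition on $\lim \frac{1}{t}\log \epsilon_t$, together with $A \leq 0$ implying $\epsilon_t \to 0$; more directly this was noted in the discussion preceding Lemma \ref{e}).

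For the first limit, I will substitute $z = -\epsilon_t$ into the second identity of the previous lemma, which is legitimate since $\epsilon_t > 0$, obtaining
\[
\sum_{j=j_{1}}^{\infty} (j+1) e^{-j\epsilon_t} \;=\; e^{-j_{1}\epsilon_t}\left(\frac{j_{1}}{1-e^{-\epsilon_t}}+\frac{1}{(1-e^{-\epsilon_t})^{2}}\right).
\]
Multiplying by $\epsilon_t^{2}$ and using the elementary asymptotics $1-e^{-\epsilon_t} = \epsilon_t + O(\epsilon_t^{2})$ as $\epsilon_t \to 0$, the factor $\frac{\epsilon_t^{2}}{(1-e^{-\epsilon_t})^{2}}$ tends to $1$ and $\frac{j_{1}\epsilon_t^{2}}{1-e^{-\epsilon_t}}$ tends to $0$, while $e^{-j_{1}\epsilon_t} \to 1$. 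The product therefore converges to $1$.

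For the second limit, I will use the geometric series $\sum_{j=j_{1}}^{\infty} e^{-j\epsilon_t} = \frac{e^{-j_{1}\epsilon_t}}{1-e^{-\epsilon_t}}$, so that
\[
\epsilon_t \sum_{j=j_{1}}^{\infty} e^{-j\epsilon_t} \;=\; \frac{\epsilon_t \, e^{-j_{1}\epsilon_t}}{1-e^{-\epsilon_t}} \;\longrightarrow\; 1
\]
by the same asymptotic. There is no real obstacle here; the whole point of the corollary is to package the lemma in the form that will be used in the proof of Theorem \ref{teo2}, where the relevant sums appearing in $S_{0}(t)$ and $S_{1}(t)$ need to be compared on the scale $\epsilon_t^{-2}$. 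The only small care needed is to note that $\epsilon_t \to 0$ (so the asymptotic $1-e^{-\epsilon_t}\sim \epsilon_t$ is valid) and that $j_{1}$ is fixed as $t$ varies, which justifies $e^{-j_{1}\epsilon_t} \to 1$.
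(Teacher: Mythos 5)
Your proof is correct and follows essentially the same route as the paper: both apply the closed-form identities of the preceding lemma with $z=-\epsilon_t$ and use $\epsilon_t(1-e^{-\epsilon_t})^{-1}\to 1$ as $\epsilon_t\to 0$. You simply spell out the details (the vanishing of the $j_1\epsilon_t^2/(1-e^{-\epsilon_t})$ term and $e^{-j_1\epsilon_t}\to 1$) that the paper leaves implicit.
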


\begin{proof}
It's a consequence of the above Lemma, and the fact that it is true that $\epsilon_{t}(1-e^{-\epsilon_{t}})^{-1}\to 1,$ as $t \to \infty$.
\end{proof}

\textbf{Proof of Theorem $\ref{teo2}$:}

We suppose that $$\sum_{j=1}^{\infty}a_{1+j} < b + d + \sum_{j=1}^{\infty}c_{1+j},$$ and, therefore, $$A:=\lim_{t\to\infty}\frac{1}{t}\log(\epsilon_{t}) = b + d + \sum_{j=1}^{\infty}c_{1+j}.$$

So, we can write $\epsilon_{t} = e^{t\,(\,d+b+\sum_{j=1}^{\infty}c_{1+j}\,)\,  + \psi(t)}$, where $\psi(t)/t \to 0$.
We are going to show that
\[\limsup_{t\to\infty} \frac{\frac{ 1 + \sum_{j=1}^{\infty}(j+1)e^{t(a_{2}+...+a_{1+j})-j\epsilon_{t}}}{1+\sum_{j=1}^{\infty}e^{t(a_{2}+...+a_{1+j})-j\epsilon_{t}}}}  {\frac{ 1+\sum_{j=1}^{\infty}(j+1)e^{t(c_{2}+...+c_{1+j})-j\epsilon_{t}}}{ 1+\sum_{j=1}^{\infty}e^{t(c_{2}+...+c_{1+j})-j\epsilon_{t}}}} = 0.\]

Given $\epsilon>0$, there exists $n_{0}$, such that for all $n\geq n_{0}$,
\[\sum_{j=1}^{\infty} a_{1+j} < \sum_{j=1}^{n} a_{1+j} < \sum_{j=1}^{\infty} a_{1+j} +\epsilon \ \ and \ \  \sum_{j=1}^{\infty} c_{1+j}<\sum_{j=1}^{n} c_{1+j} < \sum_{j=1}^{\infty} c_{1+j} +\epsilon.\]
So, we can write
\[\limsup_{t\to\infty} \frac{\frac{ 1 + \sum_{j=1}^{\infty}(j+1)e^{t(a_{2}+...+a_{1+j})-j\epsilon_{t}}}{1+\sum_{j=1}^{\infty}e^{t(a_{2}+...+a_{1+j})-j\epsilon_{t}}}}  {\frac{ 1+\sum_{j=1}^{\infty}(j+1)e^{t(c_{2}+...+c_{1+j})-j\epsilon_{t}}}{ 1+\sum_{j=1}^{\infty}e^{t(c_{2}+...+c_{1+j})-j\epsilon_{t}}}} \]
\[\leq \limsup_{t\to\infty}(1 + \sum_{j=1}^{\infty}(j+1)e^{t(a_{2}+...+a_{1+j})-j\epsilon_{t}})\frac{1+\sum_{j=1}^{\infty}e^{t(c_{2}+...+c_{1+j})-j\epsilon_{t}}}{
1+\sum_{j=1}^{\infty}(j+1)e^{t(c_{2}+...+c_{1+j})-j\epsilon_{t}}}\]
\scriptsize
\begin{equation}
\leq \left(\limsup_{t\to\infty}(1 + \sum_{j=1}^{\infty}(j+1)e^{t(a_{2}+...+a_{1+j})-j\epsilon_{t}})\right)\left(\limsup_{t\to\infty}\frac{1+\sum_{j=1}^{\infty}e^{t(c_{2}+...+c_{1+j})-j\epsilon_{t}}}{
1+\sum_{j=1}^{\infty}(j+1)e^{t(c_{2}+...+c_{1+j})-j\epsilon_{t}}}\right) \label{eq3}
\end{equation}
\normalsize
First, we analise the second term:
\[\limsup_{t\to\infty}\frac{1+\sum_{j=1}^{\infty}e^{t(c_{2}+...+c_{1+j})-j\epsilon_{t}}}{
1+\sum_{j=1}^{\infty}(j+1)e^{t(c_{2}+...+c_{1+j})-j\epsilon_{t}}}\]
\[ \leq \limsup_{t\to\infty} \frac{1+\left(\sum_{j=1}^{n_{0}-1}e^{t(c_{2}+...+c_{1+j})-j\epsilon_{t}}\right) + e^{t\sum_{j=1}^{\infty} c_{1+j} +t\epsilon}\sum_{j=n_{0}}^{\infty}e^{-j\epsilon_{t}}}{
1+\sum_{j=1}^{n_{0}-1}(j+1)e^{t(c_{2}+...+c_{1+j})-j\epsilon_{t}} + e^{t\sum_{j=1}^{\infty} c_{1+j} }\sum_{j=n_{0}}^{\infty}(j+1)e^{-j\epsilon_{t}}}\]
\[=\limsup_{t\to\infty} \frac{1+ e^{t\sum_{j=1}^{\infty} c_{1+j} +t\epsilon}\sum_{j=n_{0}}^{\infty}e^{-j\epsilon_{t}}}{
1+e^{t\sum_{j=1}^{\infty} c_{1+j} }\sum_{j=n_{0}}^{\infty}(j+1)e^{-j\epsilon_{t}}}\]
\[\stackrel{\text{Cor}.\,\ref{cora}}{=} \limsup_{t\to\infty} \frac{1+ e^{t\sum_{j=1}^{\infty} c_{1+j} +t\epsilon}(\epsilon_{t})^{-1}}{
1+e^{t\sum_{j=1}^{\infty} c_{1+j} }(\epsilon_{t})^{-2}}\]
\[= \limsup_{t\to\infty} \frac{1+ e^{t\left(\sum_{j=1}^{\infty} c_{1+j}\right) +t\epsilon -(tb+td+\left(t\sum_{j=1}^{\infty} c_{1+j}\right) +\psi(t))}}{
1+e^{t\left(\sum_{j=1}^{\infty} c_{1+j}\right) -2(tb+td+\left(t\sum_{j=1}^{\infty} c_{1+j}\right) +\psi(t)) }}\]
\[ \stackrel{b,\,d<0}{=}\limsup_{t\to\infty} \frac{e^{t\epsilon -(tb+td + \psi(t))}}{
e^{-t\left(\sum_{j=1}^{\infty} c_{1+j}\right) -2(tb+td +\psi(t)) }}\]
\[= \limsup_{t\to\infty} e^{t\epsilon + (tb+td + \psi(t))+t\left(\sum_{j=1}^{\infty} c_{1+j}\right)}.\]
%

For $\epsilon$ small this expression converges to
zero. Now, we return to $(\ref{eq3})$ and
we get  that it's only necessary to show that
\[ \limsup_{t\to\infty}\left((1 + \sum_{j=1}^{\infty}(j+1)e^{t(a_{2}+...+a_{1+j})-j\epsilon_{t}}) e^{t\epsilon + (tb+td + \psi(t))+t\left(\sum_{j=1}^{\infty} c_{1+j}\right)}\right) \leq 0.\]

If,
\[\limsup_{t\to\infty}\sum_{j=1}^{\infty}(j+1)e^{t(a_{2}+...+a_{1+j})-j\epsilon_{t}} <\infty,\]
then, the claim follows at once.
\newline
Suppose now that
\[\limsup_{t\to\infty}\sum_{j=1}^{\infty}(j+1)e^{t(a_{2}+...+a_{1+j})-j\epsilon_{t}} =\infty.\]

From an analogous argument, as used above in the second term analysis, we conclude that
\[ \limsup_{t\to\infty}(1 + \sum_{j=1}^{\infty}(j+1)e^{t(a_{2}+...+a_{1+j})-j\epsilon_{t}})\]
\[\leq \limsup_{t\to\infty} e^{t\left(\sum_{j=1}^{\infty}a_{1+j}\right) +t\epsilon -2(tb+td+t\left(\sum_{j=1}^{\infty}c_{1+j}\right) +\psi(t))} \]
So, we get
\[ \limsup_{t\to\infty}\left((1 + \sum_{j=1}^{\infty}(j+1)e^{t(a_{2}+...+a_{1+j})-j\epsilon_{t}}) e^{t\epsilon + (tb+td + \psi(t))+t\left(\sum_{j=1}^{\infty} c_{1+j}\right)}\right)\]
\[\leq \limsup_{t\to\infty}\left( e^{t\left(\sum_{j=1}^{\infty}a_{1+j}\right) +t\epsilon -2(tb+td+t\left(\sum_{j=1}^{\infty}c_{1+j}\right) +\psi(t))}e^{t\epsilon + (tb+td + \psi(t))+t\left(\sum_{j=1}^{\infty} c_{1+j}\right)}\right)\]
\[= \limsup_{t\to\infty} e^{t\left( (\sum_{j=1}^{\infty}a_{1+j}) -(b+d+(\sum_{j=1}^{\infty}c_{1+j}))\right) +2t\epsilon -\psi(t)} .\]
By hypothesis,  $(\sum_{j=1}^{\infty}a_{1+j}) -(b+d+(\sum_{j=1}^{\infty}c_{1+j}))<0$, so taking $\epsilon$ sufficiently small, and using the fact that $\psi(t)/t \to 0$, we get that $\frac{\mu_{t}([0])}{\mu_{t}([1])} \to 0$, exponentially fast.

\qed

\begin{example}

Assume $a=0,c=0$, $a_p=- 2^{2-p}$, $c_p= - 3^{2-p}$, $b=-2,d=-\frac{3}{2}$, $b_1=d_1<0,$
$b_p=a_2 +.. + a_p$, $d_p=c_2 +.. + c_p, \forall p\geq 2$.

 Moreover, $\beta(f)=0$, and, $P(tf)$ satisfies, for all $t$, the equation
$$
1=e^{t d_1- P(tf)}+ \sum_{j=1}^{\infty}e^{t(d_{1+j}+ b_{1+j}) - (j+1)P(tf)}= $$
$$
e^{t d_1- P(tf)}+ \sum_{j=1}^{\infty}e^{t( \frac{1-3^{-j}}{1-3^{-1}} + \frac{1-2^{-j}}{1-2^{-1}}) - (j+1)P(tf)}. $$

In this case, it is easy (Lemma \ref{epsilont}) to see that $A=(b+d)= -2 - 3/2= -7/2$.

Moreover,
$$V(1^\infty)= b-d=-1/2, V(01..)= b, V(0^p1..)= b -(a_2+...+a_p), V(0^\infty)=0,$$
 $$V(10..)= b, V(1^p0..)= b -(c_2+...+c_p),$$
and, $\beta_{\infty,t}= e^{t (b-d)}.$

Such $V$ is calibrated.

For a fixed $t$ and $q\geq 2$ we have
$$ \frac{\alpha_{q,t}}{\beta_{q,t}}=\frac{e^{t\,
(c_2+...+c_q)}  }{e^{t\,
(a_2+...+a_q)}  }$$

It is easy to see that $S_0(t)=S_1(t)$, for all $t$, therefore, $\mu_t([0])=1/2=\mu_t([1]).$ We also have $\mu_t([01])=\mu_t([10]).$

For fixed $n\geq 2$, we have

$$\frac{\mu_{t}([0^{n}1])}{\mu_{t}([1^{n}0])}=\frac{e^{t(a_{2}+...+a_{n}) +\log(\alpha_{n,t}) - \log(\alpha_{1,t})}}{e^{t(c_{2}+...+c_{n}) +\log(\beta_{n,t}) - \log(\beta_{1,t})} }=$$
$$e^{ t(\log(\beta_{1,t}) - \log(\alpha_{1,t}) ) }=1.$$

Therefore, $\mu_t \to \frac{1}{2} \delta_{0^\infty}+  \frac{1}{2} \delta_{1^\infty},$ as $t \to \infty$.

In this case we have a selection of a non-ergodic probability.

Note that
$$\lim_{t \to \infty} \frac{1}{t} \log(\alpha_{j,t})= V (0^j1..)=b -(a_2+...+a_j) ,$$
and
$$\lim_{t \to \infty} \frac{1}{t} \log(\alpha_{1,t})= V (01..)=b  .$$

Now, we would like to estimate the limit

$$\lim_{t \to \infty} \frac{1}{t} \log ( \mu(K)),$$

for a certain class of cylinder sets $K$. We point out that \cite{BLT} consider only the case where the maximizing probability is unique (see also \cite{LM} \cite{M} for a different setting which do not require uniqueness).

 In order to do that we need first to estimate
 $$\lim_{t \to \infty} \frac{1}{t} \log ( S_0 (t)),$$
where, by Proposition \ref{quo}

$$
S_{0}(t):= \frac{ e^{td_{1}} + \sum_{j=1}^{\infty}(j+1)e^{td_{1+j} +t(a_{2}+...+a_{1+j})-jP(tf)}}{e^{td_{1}}+\sum_{j=1}^{\infty}e^{td_{1+j} +t(a_{2}+...+a_{1+j})-jP(tf)}}.$$

We claim that
$$\lim_{t \to \infty} \frac{1}{t} \log ( S_0 (t))= -A =7/2.$$

Indeed, by lemma \ref{e} we get that $\frac{1}{t} \log$ applied to the denominator of $S_0(t)$ goes to
$$ max\{  sup_{j \geq 0}(d_{1+j} + a_{2} + ... + a_{1+j} ),  d + \sum_{j\geq
2} a_{j} - A\}.$$

In our case we can easily get that the above expression is $ d + \sum_{j\geq
2} a_{j} - A=-3/2\, -2\, + 7/2\,=\,0.$

Moreover, by Lemma \ref{cora}, $\frac{1}{t} \log$ applied to the numerator of $S_0(t)$ goes to
$$ max\{  sup_{j\geq 0}(d_{1+j} + a_{2} + ... + a_{1+j} ),  d + \sum_{j\geq 2} a_{j} - 2A\}=  d + \sum_{j\geq 2} a_{j} - 2A=7/2.$$
This shows the claim.

Note that in our case $\mu_t[01]= \frac{1}{2 S_0(t)}.$

Therefore,
$$\lim_{t \to \infty} \frac{1}{t} \log \mu_t[01]= -7/2.$$

In the same way
$$\lim_{t \to \infty} \frac{1}{t} \log \mu_t[10]= -7/2.$$

In a similar way as before, for $j\geq 2$
$$\lim_{t \to \infty} \frac{1}{t} \log \mu_t[0^j1]= \lim_{t \to \infty} \frac{1}{t} \log \mu_{t}([01])e^{-(j-1)P(tf)+t(a_{2}+...+a_{j}) +\log(\alpha_{j,t}) - \log(\alpha_{1,t})}=
$$
$$\lim_{t \to \infty} \frac{1}{t} \log \mu_{t}([01])e^{-(j-1)P(tf)}=-7/2- \lim_{t \to \infty}  (j-1)\, \frac{P(tf)}{t}=-7/2 .$$

In the same way
$$\lim_{t \to \infty} \frac{1}{t} \log \mu_t[1^j0]=-7/2.$$

The final conclusion is that, for the present example, we are able to get the above limit for a certain class of cylinders  $K$. The estimates are all explicit.

One can show that the limit  $\lim_{t \to \infty} \frac{1}{t} \log \mu_t(K)$ exists, and it is also true a  Large Deviation Principle with the usual deviation function $I$ \cite{BLT}.

\end{example}

\end{document}